\newtheorem{Theorem}{Theorem}[section]
\newtheorem{Lemma}[Theorem]{Lemma}
\newtheorem{Proposition}[Theorem]{Proposition}
\newtheorem{Corollary}[Theorem]{Corollary}
\def\ot{\otimes}
\def\ep{\varepsilon}
\def\gr{\operatorname{gr}}
\def\Hom{\operatorname{Hom}}
\def\Reg{\operatorname{Reg}}
\def\Aut{\operatorname{Aut}}
\def\Alg{\operatorname{Alg}}
\def\co-Lift{\operatorname{co-Lift}}
\def\and{\operatorname{and}}
\def\im{\operatorname{im}}
\def\m{\operatorname{m}}
\def\H{H}
\def\del{\partial}
\def\Der{\operatorname{Der}}
\def\im{\operatorname{im}}
\def\H{\operatorname{\mathcal H}}
\def\exp{\operatorname{exp}}
\def\Exp{\operatorname{Exp}}
\def\im{\operatorname{im}}
\newtheorem{Conjecture}{Conjecture}
\def\setst#1#2{\left\{\left. #1 \right| #2 \right\}}
\title{Cohomological aspects of Hopf algebra liftings}
\author{L. Grunenfelder}
\address{Department of Mathematics, The University of British Columbia, 
Vancouver, BC V6T 1Z2, Canada}
\email{luzius@math.ubc.ca, luzius@mathstat.dal.ca}
\begin{document}

\tolerance=1000 \overfullrule 5pt

\begin{abstract} A recent result of ours \cite{GM} shows that all Hopf algebra liftings of a given diagram in the sense of Andruskiewitsch and Schneider are cocycle deformations of each other. Here we develop a \lq non-abelian' cohomology theory, which gives a method for an explicit description of cocycles relevant to the lifting process.  \end{abstract}

\maketitle

\setcounter{section}{-1}

\section{Introduction}

The Nichols algebra $B(V)$ of a crossed $kG$-module $V$ is a connected braided Hopf algebra. In terms of generators and relations it can be described via a certain pushout diagram
$$\begin{CD}
K(V) @>\kappa >> R(V) \\
@V\ep VV  @V\pi VV \\
k @>\iota >>  B(V)
\end{CD}$$
of connected braided Hopf algebras. The Radford biproduct or bosonization $H(V)=B(V)\# kG$ has a similar presentation in the category of ordinary Hopf algebras. A lifting of $H(V)$ is a pointed Hopf algebra $H$ for which $\gr^cH\cong H(V)$, where $\gr^cH$ is the graded Hopd algebra associated with the coradical filtration of $H$. Such liftings are obtained by deforming the multiplication of $H(V)$. In this context the lifting problem for $V$ is asking for the characterization and classification of all liftings of $H(V)$. This problem has been solved by Andruskiewitsch and Schneider in \cite{AS} for a large class of crossed $kG$-modules of finite Cartan type, which will carry the attribute \lq special' in this paper. This allows, in particular, for a classification of all finite dimensional pointed Hopf algebras $A$ for which the order of the abelian group of points is not divisible by any prime $< 11$. In recent work \cite{GM} we have shown that for any given $V$ in this class all liftings of $H(V)$ are cocycle deformations of each other (see also \cite{Ma1}, Appendix).  This is done via a description of the lifted Hopf algebras suitable for the application of  results by Masuoka about Morita-Takeuchi equivalence \cite{Ma} and by Schauenburg about Hopf-Galois extensions \cite{Sch}. For some special cases such results had been obtained in \cite{Ma, Di, BDR, Gr}. In addition, our results in \cite{GM} show that every lifting of $H(V)$, and therefore the corresponding cocycle, is completely determined by a $G$-invariant algebra map $f\in\Alg_G(K(V),k)$, but without  an explicit description of the corresponding cocycle in terms of $f$. 

In the present paper we aim at making this connection between the $G$-invariant algebra map $f\in\Alg_G(K,k)$ and the corresponding deforming cocycle $\sigma :B\ot B\to k$ more explicit. For that purpose we first describe a non-abelian equivariant cohomology theory for braided Hopf algebras $X$  in the category of crossed $H$-modules and for their bosonizations $X\# H$, where $H$ is an ordinary Hopf algebra. The Radford biproduct $X\# H$ is an ordinary Hopf algebra and carries the obvious $H$-bimodule structure. A pushout diagram  of (braided) Hopf algebras as above, in which $\kappa $ has a $H$-module coalgebra retraction gives rise to a Meier-Vietories type 5-term exact sequence 
\begin{eqnarray*}
1 \to \Alg_H(B,k) {\buildrel\pi^*\over\longrightarrow} \Alg_H(R,k) {\buildrel\kappa^*\over\longrightarrow} \Alg_H(K,k) {\buildrel\delta\over\longrightarrow} \H_H^2(B,k) {\buildrel\pi^*\over\longrightarrow} \H_H^2(R,k)
\end{eqnarray*}
of pointed sets. In the situation of the lifting problem,  when $B$ is the Nichols algebra of a crossed $kG$-module of special finite Cartan type, then $\Alg_G(R,k)$ is trivial. If, in addition, $K$ is a $K$-bimodule coalgebra retract in $R$, then the connecting map $\delta :\Alg_G(K(V),k)\to \H^2_G(B(V),k)$ exists and is injective. Then, in view of the characterization of  liftings in \cite{AS, GM}, the cocycles obtained via the connecting map account for all liftings of $B(V)\# kG$.

The  5-term sequence for equivariant Hochschild cohomology
\begin{eqnarray*}
0 \to \Der_H(B,k) {\buildrel\pi^*\over\longrightarrow} \Der_H(R,k) {\buildrel\kappa^*\over\longrightarrow} \Der_H(K,k) {\buildrel\delta\over\longrightarrow} H_H^2(B,k) {\buildrel\pi^*\over\longrightarrow} H_H^2(R,k)
\end{eqnarray*}
has been established in \cite{GM} and is an exact sequence of  vector spaces. Here it suffices that $K$ is a $K$-bimodule retract in $R$, which in the liftings situation is always the case. The question about the relationship between Hochschild cohomology and non-abelian cohomology naturally arises in this context. In the cocommutative case there are Sweedler's results. For quantum linear spaces, i.e: for diagrams of type $A_1\times A_1\times\ldots\times A_1$, there is an exponential relationship between Hochschild cocycles and  those \lq multiplicative' cocycles which depend on the root vector parameters alone \cite{GM}. Here we present some more general results on this topic involving linking as well. This includes an approach to quantum planes quite different from that of  \cite{ABM}, Section 5. In the last section we also develop a program for the connected case, and apply it to diagrams of type $A_2$. Results for type $A_n$, $n>2$, and for type $B_2$ will be part of a forthcoming paper.

The notation in the paper as in \cite{GM} is pretty much standard; $m:A\ot A\to A$ denotes multiplication, $\Delta :C\ot C\to C$ comultiplication, $s:H\to H$ the antipode, and $*:\Hom (C,A)\ot\Hom (C,A)\to \Hom (C,A)$ the convolution multiplication $f*f'=m(f\ot f')\Delta$. We use Sweedler's notation in the form $\Delta (c)= c_1\ot c_2$ etc., and also $\Delta^{(n)}=(1\ot\Delta^{(n-1)})\Delta$ for $n\ge 1$ with $\Delta^{(0)}=1$. The notaion used for coactions  of a Hopf algebra $\delta :X\to H\ot X$ is $\delta (x)=x_{-1}\ot x_0$.

\section{A non-abelian cohomology}

Every lifting of the bosonisation $A=B\# kG$ of the Nichols algebra $B$ of a finite dimensional special crossed $G$-module $V$ is determined by a $G$-invariant algebra map $f\in_G\Alg_G(K\# kG,k)$, and it is also a cocycle deformation $A_{\sigma}$ of $A$. The $G$-invariant \lq multiplicative' cocycle $\sigma :A\ot A\to k$ must therefore be completely determined by the $G$-invariant algebra map $f:K\to k$. In the examples presented in \cite{GM} Section 3 the relation between the two entities is given explicitly. In this paper non-abelian cohomology will serve to clarify this relationship for some special diagrams of finite Cartan type. 

\subsection{The \lq multiplicative' cohomology} The non-abelian equivariant cohomology of a braided Hopf algebra in the category of crossed $H$-modules $X$ or its bosonization, which is an ordinary Hopf algebra, is defined via the cosimplicial group complex of regular elements
$$\begin{array}{ccccccc}
\Reg_H(k,k) & ^{\del^0\atop{\longrightarrow}}_{\del^1\atop{\longrightarrow}} & \Reg_H (X,k) & ^{{\del^0\atop{\longrightarrow}}\atop{\del^1\atop{\longrightarrow}}}_{\del^2\atop{\longrightarrow}} & \Reg_H (X^2,k) & ^{{\del^0\atop{\longrightarrow}}\atop{\del^1\atop{\longrightarrow}}}_{{\del^2\atop{\longrightarrow}}\atop{\del^3\atop{\longrightarrow}}} & \Reg_H\ (X^3,k)
\end{array}$$
in the standard cosimplicial algebra complex
$$\begin{array}{ccccccc}
\Hom_H(k,k) & ^{\del^0\atop{\longrightarrow}}_{\del^1\atop{\longrightarrow}} & \Hom_H (X,k) & ^{{\del^0\atop{\longrightarrow}}\atop{\del^1\atop{\longrightarrow}}}_{\del^2\atop{\longrightarrow}} & \Hom_H (X^2,k) & ^{{\del^0\atop{\longrightarrow}}\atop{\del^1\atop{\longrightarrow}}}_{{\del^2\atop{\longrightarrow}}\atop{\del^3\atop{\longrightarrow}}} & \Hom_H\ (X^3,k) ,
\end{array}$$
where $X^i$ denotes the i-th tensor power of $X$, and where
$$\del^if = \left\{ \begin{array}{ll}
\ep\ot f & \mbox{if $i=0$} \\
f(1^{i-1}\ot m\ot 1^{n-i-1}) & \mbox{if $0<i<n$} \\
f\ot\ep  & \mbox{if $i=n$}
\end{array} \right. $$
are the standard cofaces.

The first equivariant \lq non-abelian' cohomology of $X$ with coefficients in $k$ is given by
$$\H^1_H(X,k)=Z^1_H(X,k)=\{ f\in\Reg_H(X,k)|\del^1f=\del^2f*\del^0f\} =\Alg_H(X,k)$$
which is a group under the convolution multiplication. A 1-cocycle is therefore an element $f\in\Reg_H(X,k)$ such that $fm=(f\ot\ep )*(\ep\ot f)=m_k(f\ot f)$, that is an algebra map. For the second cohomology define the set of \lq non-abelian' 2-cocycles by
$$Z^2_H(X,k)=\{ \sigma\in\Reg_H(X^2,k)|\del^0\sigma *\del^2\sigma =\del^3\sigma *\del^1\sigma , \sigma (\iota\ot 1)=\ep =\sigma (1\ot\iota ) \}$$
which means that $\sigma\in\Reg_H(X^2,k)$ is a cocycle if and only if the \lq multiplicative' 2-cocycle conditions
$$(\ep\ot\sigma ) *\sigma (1\ot m) =(\sigma\ot\ep )*\sigma (m\ot 1) \ ,\ \sigma (\iota\ot 1)=\ep =\sigma (1\ot\iota )$$
are satisfied, in particular $\sigma (y_1\ot z_1)\sigma (x\ot y_2z_2)=\sigma (x_1\ot y_1)\sigma (x_2y_2\ot z)$ in the ordinary case and $\sigma (y_1\ot (y_2)_{-1}z_1)\sigma (x\ot (y_2)_0z_2)=\sigma (x_1\ot (x_2)_{-1}y_1)\sigma ((x_2)_0y_2\ot z)$ in the braided case.
Define a relation on $\Reg_H(X^2,k)$ by declaring $\sigma\sim\sigma '$ if and only if $\sigma '=\del^0\chi *\del^2\chi *\sigma *\del^1\chi^{-1}$ for some $\chi\in\Reg_H(X,k)$.

\begin{Lemma} \label{nonab} The relation $\sim $ just defined on $\Reg_H(X^2,k)$ is an equivalence relation, which restricts to $Z^2_H(X,k)$. The second "non-abelian" cohomology $\H^2_H(X,k)=Z^2_H(X,k)/\sim$ is a pointed set with distinguished element $class (\ep\ot\ep )=\im (\del :\Reg_H(X,k)\to  \Reg_H(X\ot X,k))$, where $\del f=\del^0f *\del^2f *\del^1f^{-1}$. Moreover, there is a natural isomorphism $\H^1_H(X,k)\cong \H^1_H(X\# H,k)$ and a natural injection $\H^2_H(X,k)\to \H^2_H(X\# H,k)$ for braided Hopf algebras $X$ in the category of crossed $H$-modules and their bosonisations $Y=X\# H$.  
\end{Lemma}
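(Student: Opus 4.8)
The plan is to establish the claims in four stages: first verifying that $\sim$ is an equivalence relation, then checking it restricts to $\Zc$ (cocycles), then identifying the distinguished class with the image of $\del$, and finally constructing the comparison maps between the braided cohomology of $X$ and the ordinary cohomology of its bosonisation $Y=X\#H$.

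For the equivalence relation, the key observation is that the three coface operators $\del^0,\del^1,\del^2:\Reg_H(X,k)\to\Reg_H(X^2,k)$ are each group homomorphisms with respect to convolution (this follows from the cosimplicial identities and the fact that the cofaces are built from $m$, $\ep$, and the $H$-module structure, all of which are algebra maps). Writing $\del\chi=\del^0\chi*\del^2\chi*(\del^1\chi)^{-1}$, reflexivity comes from $\chi=\ep$; for symmetry and transitivity I would like to say $\del$ is a homomorphism, but because convolution on $\Reg_H(X^2,k)$ is generally non-abelian the twisted action $\sigma\mapsto\del^0\chi*\del^2\chi*\sigma*(\del^1\chi)^{-1}$ is not quite conjugation, so the main subtlety is checking that these twisted translates compose correctly. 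I would verify directly that if $\sigma'=\del^0\chi*\del^2\chi*\sigma*(\del^1\chi)^{-1}$ and $\sigma''=\del^0\psi*\del^2\psi*\sigma'*(\del^1\psi)^{-1}$, then $\sigma''$ is obtained from $\sigma$ by the translate associated with $\chi*\psi$ (or $\psi*\chi$); this reduces to the single identity $\del^0(\chi*\psi)=\del^0\chi*\del^0\psi$ and similarly for $\del^1,\del^2$, combined with commuting $\del^1$-terms past the others using the cosimplicial relations. This homomorphism property of each coface is exactly the technical heart, and I expect it to be the main obstacle because one must be careful that the normalization conditions $\sigma(\io\ot1)=\ep=\sigma(1\ot\io)$ and the braiding are preserved under the cofaces.

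That $\sim$ restricts to $\Zc$ requires showing that if $\sigma$ satisfies the cocycle condition $\del^0\sigma*\del^2\sigma=\del^3\sigma*\del^1\sigma$ then so does $\sigma'=\del\chi*\sigma$. The clean way is to recognize that $\del:\Reg_H(X,k)\to\Reg_H(X^2,k)$ composed with the cocycle-condition map lands in the trivial element, i.e. that $\del\chi$ is automatically a cocycle for any $\chi$; this is the standard cosimplicial identity $\del^{i+1}\del^j=\del^j\del^i$ for $i\ge j$, repackaged multiplicatively. Granting that each coface is a convolution homomorphism, applying $\del^0,\del^1,\del^2,\del^3$ to the defining relation for $\del\chi$ and reassembling via the simplicial identities shows $\del\chi\in\Zc$, and then a short computation confirms the cocycle condition is stable under left translation by a coboundary. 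The distinguished element is then $\coclass(\ep\ot\ep)$, and identifying it with $\im(\del)$ amounts to observing $\ep\ot\ep=\del\ep$ and that $\sigma\sim\ep\ot\ep$ iff $\sigma=\del\chi$ for some $\chi$.

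For the final comparison, I would use the bosonisation structure $Y=X\#H$, in which $X\hookrightarrow Y$ is an $H$-module coalgebra and there is an $H$-module coalgebra projection $Y\to X$. An $H$-equivariant algebra map $X\to k$ extends uniquely to an $H$-equivariant algebra map $Y\to k$ (killing the $H$-part via the augmentation), giving the isomorphism $\H^1_H(X,k)\cong\H^1_H(Y,k)$ on the nose. For $\H^2$, restriction along $X\ot X\hookrightarrow Y\ot Y$ sends $H$-equivariant cocycles on $Y$ to $H$-equivariant cocycles on $X$, and conversely a normalized cocycle on $X$ extends to $Y$ by declaring it $\ep$ whenever an $H$-factor is present; the nontrivial point is that this assignment respects the equivalence $\sim$ and is injective, which follows because a coboundary $\del\chi$ on $X$ extends to the coboundary $\del\tilde\chi$ on $Y$ where $\tilde\chi$ is the unique equivariant extension of $\chi$. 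Injectivity of $\H^2_H(X,k)\to\H^2_H(Y,k)$ then reduces to showing that if the extended cocycle is a coboundary on $Y$ then the trivializing $1$-cochain can be chosen to come from $X$, which uses the splitting $Y\cong X\#H$ to project the trivializing cochain back to $X$.
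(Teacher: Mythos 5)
Your overall architecture (equivalence relation, restriction to cocycles, distinguished class, comparison with the bosonisation) matches the paper's, but two steps as you describe them would not go through. First, your extension of cochains from $X$ to $Y=X\# H$ is wrong: ``declaring it $\ep$ whenever an $H$-factor is present'' amounts to pulling back along $p\ot p$ with $p(xh)=x\ep (h)$, and this map does not intertwine the inner cofaces, since $p\,m_Y(xh\ot x'h')=x(h\cdot x')\ep (h')$ while $m_X(p\ot p)(xh\ot x'h')=xx'\ep(h)\ep(h')$; consequently the naive extension of a braided cocycle is in general not a cocycle on $Y$. The paper instead uses $\psi_n:Y^n\to X^n$, $\psi_n(xh\ot y)=x\ot h\psi_{n-1}y$, so that $\psi^2f(xh\ot x'h')=f(x\ot h\cdot x')\ep (h')$: the $H$-part of each tensor factor must act on all subsequent factors before being killed. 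This $\psi^\bullet$ is a morphism of cosimplicial groups with $\psi^1$ an isomorphism and all $\psi^n$ injective, which is exactly what makes the $\H^1$-isomorphism, the $\H^2$-injection, and your ``project the trivializing cochain back to $X$'' step work. Moreover, the paper runs this comparison \emph{first}, so that all the convolution computations are done in the ordinary Hopf algebra $Y$; this is what settles your (correctly flagged) worry about the braiding, since the elementwise commutation $\del^0f'*\del^2f''=\del^2f''*\del^0f'$ -- the identity actually needed for symmetry and transitivity (not commutation of the $\del^1$-terms) -- is immediate for $Y$ but not formal in the braided setting.

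Second, in the step showing $\sim$ restricts to $Z^2_H$, you replace the relation by $\sigma'=\del\chi*\sigma$. That is not the relation: the twisted translate is $\sigma'=\del^0\chi*\del^2\chi*\sigma*\del^1\chi^{-1}$ with $\del^1\chi^{-1}$ on the \emph{right} of $\sigma$, and since $\Reg_H(Y^2,k)$ is non-abelian this is not left multiplication by the coboundary $\del\chi=\del^0\chi*\del^2\chi*\del^1\chi^{-1}$ (you noted this yourself earlier, but then used the wrong form anyway). Stability of the cocycle condition under left translation by coboundaries therefore proves the wrong statement; one has to verify the cocycle identity for the sandwiched expression directly, as the paper does by an explicit Sweedler computation, again using the $\del^0$/$\del^2$ commutation. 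The remaining parts of your plan (cofaces are convolution homomorphisms, $\ep\ot\ep=\del\ep$, the class of $\ep\ot\ep$ is $\im\del$) are fine and agree with the paper.
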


\begin{proof} First we will show that it is sufficient to prove the assertions for ordinary Hopf algebras. If $X$ is a Hopf algebra in the category of crossed $H$-modules then $Y=X\# H$ is an ordinary Hopf algebra.
The linear map
$$\psi_n:Y^n\to X^n$$
defined inductively by $\psi_1(xh)=x\ep (h)$ and $\psi_n(xh\ot y)=x\ot h\psi_{n-1}y $ is a $H$-bimodule map (diagonal left and trivial right $H$-action on $X^n$), which has linear right inverse $\phi_n: X^n\to Y^n$ given by $\phi_1(x)=x1$ and $\phi_n(x\ot y)=x1\ot\phi_{n-1}y$. It factors through $Y^{(n)}=Y\ot_HY\ot_H\ldots\ot_HY$ to give a left $H$-module isomorphism $Y^{(n)}\ot_Hk\cong X^n$. Induction on $n$ shows that it is also compatible with the \lq coalgebra structures' in that $\Delta_{X^n}\psi_n=(\psi_n\ot\psi_n)\Delta_{Y^n}$ and $\ep\psi_n=\ep$. The induced injective algebra map
$$\psi^n:\Hom_H(X^n,k)\to\Hom_H(Y^n,k)$$
is then given by $\psi^n(f)=f\psi_n$, that is $\psi^n(f)(xh\ot y)=f(x\ot h\psi_{n-1}y)$ or $\psi^nf(x^1h^1\ot x^2h^2\ot\ldots\ot x^ng^n)=f(x^1\ot h^1_1x^2\ot\ldots\ot h^1_{n-1}h^2_{n-2}\ldots h^{n-1}x^n)$. It is an algebra map, since it preserves the convolution multiplication,
\begin{eqnarray*}
\psi^n(f*f')=(f*f')\psi_n=(f\ot f')\Delta_{X^n}\psi_n=(f\ot f')(\psi_n\ot\psi_n)\Delta_{Y^n} \\
=(f\psi_n\ot f'\psi_n)\Delta_{Y^n}=\psi^n(f)*\psi^n(f')
\end{eqnarray*}
and the convolution identity, $\psi^n(\ep )=\ep\psi_n=\ep$. It therefore automatically restricts to an injective group homomorphism
$$\psi^n:\Reg_H(X^n,k)\to\Reg_H(Y^n,k)$$
between the groups of regular elements. This leads to a injective homomorphism of the standard cosimplicial groups

$$\begin{array}{ccccccc}
\Reg_H(k,k) & ^{\del^0\atop{\longrightarrow}}_{\del^1\atop{\longrightarrow}} & \Reg_H (X,k) & ^{{\del^0\atop{\longrightarrow}}\atop{\del^1\atop{\longrightarrow}}}_{\del^2\atop{\longrightarrow}} & \Reg_H (X^2,k) & ^{{\del^0\atop{\longrightarrow}}\atop{\del^1\atop{\longrightarrow}}}_{{\del^2\atop{\longrightarrow}}\atop{\del^3\atop{\longrightarrow}}} & \Reg_H (X^3,k) \\
\|  & & \downarrow\psi^1 & & \downarrow\psi^2 & & \downarrow\psi^3 \\
\Reg_H(k,k) & ^{\del^0\atop{\longrightarrow}}_{\del^1\atop{\longrightarrow}} &\Reg_H (Y,k) & ^{{\del^0\atop{\longrightarrow}}\atop{\del^1\atop{\longrightarrow}}}_{\del^2\atop{\longrightarrow}} & \Reg_H (Y^2,k) & ^{{\del^0\atop{\longrightarrow}}\atop{\del^1\atop{\longrightarrow}}}_{{\del^2\atop{\longrightarrow}}\atop{\del^3\atop{\longrightarrow}}} & \Reg_H(Y^3,k)
\end{array}$$
compatible with the standard cofaces
$$\del^if = \left\{ \begin{array}{ll}
\ep\ot f & \mbox{if $i=0$} \\
f(1^{i-1}\ot m\ot 1^{n-i-1}) & \mbox{if $0<i<n$} \\
f\ot\ep  & \mbox{if $i=n$}
\end{array} \right. $$
in which $\psi^1$ is an isomorphism. It then suffices to prove the first assertion for the ordinary Hopf algebra $Y=X\# H$.

First observe that if $f\in\Reg_H(Y,k)$ then $\del f=\del^0f *\del^2f *\del^1f^{-1}$ is a $2$-cocycle:
\begin{eqnarray*}
\lefteqn{\bigl ((\ep\ot\del f)*\del f(1\ot m)\bigr )(x\ot y\ot z)}\\
&=&\del f(y_1\ot z_1)\del f(x\ot y_2z_2) \\
&=&f(z_1)f(y_1)f^{-1}(y_2z_2)f(y_3z_3)f(x_1)f^{-1}(x_2y_4z_4) \\
&=&f(x_1)f(y_1)f(z_1)f^{-1}(x_2y_2z_2) \\
&=&f(y_1)f(x_1)f^{-1}(x_2y_2)f(z_1)f(x_3y_3)f^{-1}(x_4y_4z_2) \\
&=&\del f(x_1\ot y_1)\del f(x_2y_2\ot z)\\
&=&(\del f\ot\ep )*\del f(m\ot 1)(x\ot y\ot z)
\end{eqnarray*}

Now we show that $\sim $ is an equivalence relation even on $\Reg_H(Y,k)$, and that it restricts to $Z_H^2(Y,k)$.

Reflexivity, $\sigma\sim\sigma$ of the relation $\sim $ obviously holds with $\chi =\ep$.

To check symmetry, observe that $\sigma '=\del^0\chi *\del^2\chi *\sigma *\del^1\chi^{-1}$ for some $\chi\in\Reg_H(Y,k)$ implies that $\sigma =\del^0\chi^{-1} *\del^2\chi^{-1} *\sigma '*\del^1\chi $ since $(\del^i\chi )^{-1}=\del^i\chi^{-1}$ and $\del^2\chi *\del^0\chi =\del^0\chi *\del^2\chi$. 

For transitivity suppose that in addition $\sigma ''=\del^0\psi *\del^2\psi *\sigma '*\del^1\psi^{-1}$ for some $\psi\in\Reg_H(Y,k)$. Then
\begin{eqnarray*}
\sigma ''&=&\del^0\psi *\del^2\psi *\del^0\chi *\del^2\chi *\sigma *\del^1\chi^{-1} *\del^1\psi^{-1} \\
&=&\del^0(\psi *\chi ) *\del^2(\psi *\chi ) *\sigma *\del^1(\psi *\chi )^{-1}
\end{eqnarray*}
since $\del^2\psi *\del^0\chi =\del^0\chi *\del^2\psi $ and the $\del^i$ are group homomorphisms.

To show that the equivalence relation $\sim$ restricts to $Z^2_H(Y,k)$ it suffices to show that if $\sigma\in Z^2_H(Y,k)$ and $\chi\in\Reg_H(Y,k)$ then $\sigma '=\del^0\chi *\del^2\chi *\sigma *\del^1\chi^{-1}$ is a cocycle as well:
\begin{eqnarray*}
\lefteqn{\bigl(\del^0\sigma '*\del^2\sigma '\bigr)(x\ot y\ot z)=\sigma '(y_1\ot z_1)\sigma '(x\ot y_2z_2)} \\
&=&\chi (z_1)\chi (y_1)\sigma (y_2\ot z_2)\chi^{-1}(y_3z_3)\chi (y_4z_4)\chi (x_1)\sigma (x_2\ot y_5z_5)\chi ^{-1}(x_3y_6z_6) \\
&=&\chi (x_1)\chi (y_1)\chi (z_1)\sigma (y_2\ot z_2)\sigma (x_2\ot y_3z_3)\chi^{-1}(x_3y_4z_4) \\
&=&\chi (x_1)\chi (y_1)\chi (z_1)\sigma (x_2\ot y_2)\sigma (x_3y_3\ot z_2)\chi^{-1}(x_4y_4z_3) \\
&=&\chi (y_1)\chi (x_1)\sigma (x_2\ot y_2)\chi^{-1}(x_3y_3)\chi (z_1)\chi (x_4y_4)\sigma (x_5y_5\ot z_2)\chi ^{-1}(x_6y_6z_3) \\
&=&\sigma '(x_1\ot y_1)\sigma '(x_2y_2\ot z)=(\del^3\sigma '*\del^1\sigma ')(x\ot y\ot z)
\end{eqnarray*}
and
$$\sigma '(x\ot 1)=\chi (x_1)\sigma (x_2\ot 1)\chi^{-1}(x_3)=\ep (x)=\chi (x_1)\sigma (1\ot x_2)\chi^{-1}(x_3)=\sigma '(1\ot x).$$
This proves the assertions for the bosonisation $Y=X\# H$. For the braided Hopf algebra $X$ they are now a consequence of the properties of the diagram above. It follows that
$$\Alg_H(X,k)=\H^1_H(X,k)\cong \H^1_H(Y,k)=\Alg_H(Y,k)$$
since $\psi^1$ is an isomorphism and $\psi^2$ is injective. Since, in addition, $\psi^3$ is injective as well it follows that $\sigma\in\Reg_H(X^2,k)$ is a 2-cocycle if and only if $\psi^2\sigma\in\Reg_H(Y^,k)$ is a cocycle. In particular, if $f\in\Reg_H(X,k)$ then $\del f=\del^0f *\del^2f *\del^1f^{-1}\in Z^2(X,k)$. Moreover, the following argument shows that the induced map $\H^2_H(X,k)\to \H^2(Y,K)$ is injective. Suppose that $\sigma , \sigma '\in Z^2(X,k)$ are such that $\psi^2\sigma \sim \psi^2\sigma '$ in $Z^2(Y,k)$. This means that 
$$\psi^2\sigma '=\del^0\psi^1\phi  *\del^2\psi^1\phi *\psi^2\sigma *\del^1\psi^1\phi^{-1} =\psi^2(\del^0\phi *\del^2\phi *\sigma *\del^1\phi^{-1} )$$
for some $\phi\in\Reg_H(X,k)$, and hence
$\sigma '=\del^0\phi *\del^2\phi *\sigma *\del^2\phi^{-1}$, where we used the fact that $\psi^1$ is an isomophism and $\psi^2$ is an injective algebra map. 
\end{proof}

Our aim here is to describe cocycle deformations of the bosonizations
$Y=X\# H$ of braided Hopf algebras $X$ in the category of crossed $H$-modules. The following calculation shows that equivalent cocycles lead to isomorphic deformations. 

\begin{Proposition} Let $Y=X\# H$ be the bosonization of a braided Hopf algebra $X$ in the category of crossed $H$-modules. If $\sigma ,\sigma '\in Z^2_H(Y,k)$ are in the same cohomology class then the cocycle deformations $Y_\sigma$ and $Y_{\sigma '}$ are isomorphic.
\end{Proposition}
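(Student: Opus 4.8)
The plan is to produce an explicit isomorphism. Since $Y=X\# H$ is an ordinary Hopf algebra, I would argue directly with $Y$ (no need to pass through the maps $\psi^n$ of the Lemma). Recall that the cocycle deformation $Y_\sigma$ has the \emph{same} underlying coalgebra, counit and unit as $Y$, and deformed product $a\cdot_\sigma b=\sigma(a_1\ot b_1)\,a_2b_2\,\sigma^{-1}(a_3\ot b_3)$, i.e. $m_\sigma=\sigma * m * \sigma^{-1}$ in the convolution algebra $\Hom_H(Y\ot Y,Y)$. Let $\chi\in\Reg_H(Y,k)$ be the element witnessing the relation, so that $\sigma'=\del^0\chi *\del^2\chi *\sigma *\del^1\chi^{-1}$; normalization of the cocycles forces $\chi(1)=1$. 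The candidate isomorphism is convolution-conjugation by $\chi$, namely $F\colon Y\to Y$, $F(a)=\chi(a_1)\,a_2\,\chi^{-1}(a_3)$, which is $H$-linear and has two-sided inverse $F^{-1}(a)=\chi^{-1}(a_1)\,a_2\,\chi(a_3)$.

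Next I would verify that $F$ is an isomorphism of the common coalgebra. Unitality is immediate, $F(1)=\chi(1)\,1\,\chi^{-1}(1)=1$, and $\ep F=\ep$ follows from $\chi * \chi^{-1}=\ep$. The one slightly delicate point is $\Delta F=(F\ot F)\Delta$: expanding gives $(F\ot F)\Delta(a)=\chi(a_1)\chi^{-1}(a_3)\chi(a_4)\chi^{-1}(a_6)\,a_2\ot a_5$, and collapsing the adjacent factors through $\chi^{-1}(a_3)\chi(a_4)=\ep$ (the identity $\chi^{-1}* \chi=\ep$ applied to consecutive Sweedler components) reduces this to $\chi(a_1)\chi^{-1}(a_4)\,a_2\ot a_3=\Delta F(a)$. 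Thus $F$ is a coalgebra automorphism of the coalgebra underlying both $Y_\sigma$ and $Y_{\sigma'}$.

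The heart of the matter is multiplicativity of $F$ as a map $Y_\sigma\to Y_{\sigma'}$, that is $F(a\cdot_\sigma b)=F(a)\cdot_{\sigma'}F(b)$. Because $F$ is a coalgebra map one has $F(a)\cdot_{\sigma'}F(b)=\sigma'(F(a_1)\ot F(b_1))\,F(a_2)F(b_2)\,\sigma'^{-1}(F(a_3)\ot F(b_3))$, the middle product being the \emph{undeformed} $m$. I would substitute $\sigma'=\del^0\chi *\del^2\chi *\sigma *\del^1\chi^{-1}$ together with the corresponding factorization of $\sigma'^{-1}$, expand everything in Sweedler notation, and then cancel the $\chi$'s against the $\chi^{-1}$'s via $\chi * \chi^{-1}=\chi^{-1}* \chi=\ep$ at each place where adjacent coproduct components meet. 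The structural reason it works is that the coface $\del^1$ involves $m$, so that $\del^1\chi(u\ot v)=\chi(uv)$ is precisely the functional needed to rebuild the outer $\chi^{\pm1}$ of $F$ evaluated on a \emph{product}; the remaining rearrangement of the $\sigma$-factors is handled by the $2$-cocycle identity for $\sigma$ — exactly the manipulation already carried out in the proof of Lemma~\ref{nonab} to show that $\del$-coboundaries preserve cocycles, now lifted one level to the multiplications. Collecting terms yields exactly $F\bigl(\sigma(a_1\ot b_1)\,a_2b_2\,\sigma^{-1}(a_3\ot b_3)\bigr)=F(a\cdot_\sigma b)$. I expect this Sweedler bookkeeping — keeping the many $\chi,\chi^{-1}$ factors in the correct order and invoking the (ordinary or braided) cocycle identity at the right moment — to be the only genuine obstacle.

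Finally, $F$ is simultaneously a coalgebra isomorphism and, by the previous step, an algebra isomorphism $Y_\sigma\to Y_{\sigma'}$; since it is a bialgebra isomorphism between two Hopf algebras it automatically intertwines the antipodes, so $F$ is the required isomorphism of Hopf algebras. As an alternative organization one could first treat a pure coboundary $\sigma'=\del^0\chi *\del^2\chi *\del^1\chi^{-1}$, showing $Y_{\del\chi}\cong Y$ through the same $F$, and then deform by $\sigma$; but the direct verification above is cleaner and avoids invoking composition of deformations.
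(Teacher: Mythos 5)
Your overall strategy --- conjugation by $\chi$ in the convolution algebra $\Hom(Y,Y)$ --- is exactly the paper's, but your map points the wrong way, and the one identity you defer as ``Sweedler bookkeeping'' is false as you state it. The paper uses $\psi=\chi^{-1}*1*\chi$, i.e.\ $\psi(a)=\chi^{-1}(a_1)a_2\chi(a_3)$, and proves $\psi m_\sigma=m_{\sigma'}(\psi\ot\psi)$. Your $F=\chi*1*\chi^{-1}$ is the convolution inverse $\psi^{-1}$, and for it the claimed identity $F(a\cdot_\sigma b)=F(a)\cdot_{\sigma'}F(b)$ fails: expanding $m_{\sigma'}(Fa\ot Fb)$ gives
$$\chi(a_1)\chi(b_1)\,\sigma'(a_2\ot b_2)\cdots=\chi(a_1)\chi(b_1)\chi(a_2)\chi(b_2)\,\sigma(a_3\ot b_3)\chi^{-1}(a_4b_4)\cdots,$$
so the inner $\chi$'s coming from $\sigma'=\del^0\chi*\del^2\chi*\sigma*\del^1\chi^{-1}$ have the \emph{same} sign as the outer ones from $F$ and produce $(\chi*\chi)$ rather than $\ep$; nothing cancels. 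The correct statement for your $F$ is $F(a\cdot_{\sigma'}b)=F(a)\cdot_\sigma F(b)$, i.e.\ $F\colon Y_{\sigma'}\to Y_\sigma$ is the algebra isomorphism (which still proves the proposition, since $F$ is invertible), or equivalently you should replace $F$ by $\chi^{-1}*1*\chi$ to get a map in the direction you announce. Since you left the verification as a plan rather than carrying it out, this sign error would have derailed the actual computation.

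Two smaller points. First, once the direction is fixed the verification is a pure cancellation argument using only $\chi*\chi^{-1}=\chi^{-1}*\chi=\ep$ and the factorization of $\sigma'$ and $\sigma'^{-1}$; the $2$-cocycle identity for $\sigma$ is \emph{not} needed here, contrary to what you suggest --- that identity is what makes $\sigma'$ a cocycle again (Lemma \ref{nonab}), not what makes the conjugation multiplicative. Second, your coalgebra-map and unit checks for $F$ are fine and agree with the paper's (tacit) use of the fact that $\psi$ is an equivariant coalgebra automorphism.
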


\begin{proof} Suppose that $\sigma ' =\del^0\chi *\del^2\chi *\sigma *\del^1\chi^{-1} $ for some $\chi \in\Reg_H(X,k)$. It suffices to show that the equivariant coalgebra automorphism $\psi =\chi^{-1} *1*\chi :Y\to Y$ is actually also an algebra map $\psi :Y_{\sigma}\to Y_{\sigma '}$. And it is, since
\begin{eqnarray*}
\lefteqn{m_{\sigma '}(\psi x\ot\psi y) =\chi^{-1}(x_1)\chi^{-1}( y_1)m_{\sigma '}(x_2\ot y_2)\chi (x_3)\chi (y_3)} \\
&=&\chi^{-1}(x_1)\chi^{-1}(y_1)\sigma '(x_2\ot y_2)x_3y_3{\sigma '}^{-1}(x_4\ot y_4)\chi (x_5)\chi (y_5) \\
&=& \sigma (x_1\ot y_1)\chi^{-1}(x_2y_2)x_3y_3\chi (x_4y_4)\sigma^{-1}(x_5\ot y_5) \\
&=& \sigma (x_1\ot y_1)\psi (x_2y_2)\sigma^{-1}(x_3\ot y_3) \\
&=&\psi m_{\sigma}(x\ot y)
\end{eqnarray*}
implies that $\psi m_{\sigma '} =m_{\sigma}(\psi\ot\psi )$.
\end{proof}

\section{A 5-term sequence in \lq non-abelian' cohomology}

A commutative \lq pushout' square of (braided) Hopf algebras in the introduction and its bosonisation can help to get an explicit description of the deforming cocycles $\sigma$ on $B$ and of the corresponding cocycles on the bosonization $A=B\# H$ in terms of the $H$-invariant algebra maps $f\in \Alg_H(K,k)$, ,  Such squares of (braided) Hopf algebras
$$\begin{CD}
K @>\kappa >> R @. \qquad\rm{\ }\qquad  @. K\# H @>\kappa\# 1 >> R\# H \\
@V\ep VV  @V\pi VV    @.     @V\ep\# 1 VV  @V\pi\# 1 VV \\
k @>\iota >> B  @.         @.   H @>\iota\# 1>> B\# H
\end{CD}$$
induce a square of cosimplicial groups

$$\begin{array}{ccccccc}
\Reg_H(k,k) & ^{\del^0\atop{\longrightarrow}}_{\del^1\atop{\longrightarrow}} & \Reg_H (B,k) & ^{{\del^0\atop{\longrightarrow}}\atop{\del^1\atop{\longrightarrow}}}_{\del^2\atop{\longrightarrow}} & \Reg_H (B^2,k) & ^{{\del^0\atop{\longrightarrow}}\atop{\del^1\atop{\longrightarrow}}}_{{\del^2\atop{\longrightarrow}}\atop{\del^3\atop{\longrightarrow}}} & \Reg_H (B^3,k) \\
\|  & & \downarrow\pi^* & & \downarrow (\pi^2)^* & & \downarrow (\pi^3)^* \\
\Reg_H(k,k) & ^{\del^0\atop{\longrightarrow}}_{\del^1\atop{\longrightarrow}} &\Reg_H (R,k) & ^{{\del^0\atop{\longrightarrow}}\atop{\del^1\atop{\longrightarrow}}}_{\del^2\atop{\longrightarrow}} & \Reg_H (R^2,k) & ^{{\del^0\atop{\longrightarrow}}\atop{\del^1\atop{\longrightarrow}}}_{{\del^2\atop{\longrightarrow}}\atop{\del^3\atop{\longrightarrow}}} & \Reg_H (R^3,k) \\
\|  & & \downarrow\kappa^* & & \downarrow (\kappa^2)^* & & \downarrow (\kappa^3)^* \\
\Reg_H(k,k) & ^{\del^0\atop{\longrightarrow}}_{\del^1\atop{\longrightarrow}} & \Reg_H(K,k) & ^{{\del^0\atop{\longrightarrow}}\atop{\del^1\atop{\longrightarrow}}}_{\del^2\atop{\longrightarrow}} & \Reg_H (K^2,k) & ^{{\del^0\atop{\longrightarrow}}\atop{\del^1\atop{\longrightarrow}}}_{{\del^2\atop{\longrightarrow}}\atop{\del^3\atop{\longrightarrow}}} & \Reg_H (K^3,k)
\end{array}$$
where the trivial part has been omitted, and a similar square for the bosonisation. The natural injective group homomorphism $\psi :\Reg_H(X,k)\to\Reg_H(X\# H,k)$ induces a natural map between these squares. Here is a 5-term sequence for non-abelian cohomology in case $\kappa :K\to R$ has a $K$-bimodule coalgebra retraction $u:R\to K$.

\begin{Theorem} \label{5-term} If $\kappa K\to R$ has a $K$-bimodule coalgebra retraction then there is an exact sequences of pointed sets
\begin{eqnarray*}
1 \to \Alg_H(B,k) {\buildrel\pi^*\over\longrightarrow} \Alg_H(R,k) {\buildrel\kappa^*\over\longrightarrow} \Alg_H(K,k) {\buildrel\delta\over\longrightarrow} \H_H^2(B,k) {\buildrel\pi^*\over\longrightarrow} \H_H^2(R,k)
\end{eqnarray*}
and an injective map induced by the cosimplicial group homomorphism $\psi^*$ into a similar exact sequence involving the bosonisations. The connecting map $\delta :\Alg_H(K,k)\to \H_G^2(B,k)$ does not depend on the particular choice of the $K$-bimodule coalgebra retraction $u:R\to K$.
\end{Theorem}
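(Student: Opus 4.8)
The plan is to follow the reduction of Lemma~\ref{nonab}: the cosimplicial homomorphisms $\psi^{n}$ embed the $\Reg_H(X^{n},k)$ into the $\Reg_H(Y^{n},k)$ for $Y=X\#H$ compatibly with all cofaces, with $\psi^{1}$ an isomorphism and $\psi^{2},\psi^{3}$ injective. Hence it suffices to construct the sequence and prove exactness for ordinary Hopf algebras $B,R,K$; the braided case and the comparison with the bosonisations then follow, the latter because $u\#1:R\#H\to K\#H$ is again a $(K\#H)$-bimodule coalgebra retraction of $\kappa\#1$. To define the connecting map, start from $f\in\Alg_H(K,k)$ and form $fu\in\Reg_H(R,k)$; this is regular because $u$ is a counital coalgebra map and $f$ an algebra map, so $(fu)^{-1}=f\,S_K\,u$. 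Its multiplicative coboundary $\del(fu)=\del^{0}(fu)*\del^{2}(fu)*\del^{1}(fu)^{-1}$ is a $2$-cocycle on $R$ by Lemma~\ref{nonab}, and I set $\delta(f):=[\bar\sigma]$, where $\bar\sigma\in Z^{2}_H(B,k)$ is the unique cochain with $\bar\sigma(\pi\ot\pi)=\del(fu)$.

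The step I expect to be the main obstacle is showing that $\del(fu)$ really factors through $\pi\ot\pi$, i.e. the descent that makes $\delta$ well defined. Writing $\del(fu)(x\ot y)=f(u x_{1})\,f(u y_{1})\,(fu)^{-1}(x_{2}y_{2})$ and testing on $x=\kappa(a)$ one computes, using that $\kappa$ is a coalgebra map, that $u$ is left $K$-linear with $u\kappa=\operatorname{id}_K$, and that $S_K$ is an antihomomorphism, that $\del(fu)(\kappa(a)\ot y)=(f*f^{-1})(a)\,(f*f^{-1})(u y)=\ep(a)\ep(y)$, and symmetrically in the second slot. The general assertion that the relevant regular cochains annihilate $\ker\pi$ in each tensor factor follows by propagating this computation over the whole ideal generated by $\kappa(K^{+})$ by means of the $K$-bimodule structure of $u$. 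This is the only place where both halves of the retraction hypothesis (coalgebra map \emph{and} bimodule map) are genuinely used, and it is the technical heart of the theorem. Granting it, $\bar\sigma$ exists and is unique since $\pi\ot\pi$ is surjective.

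Next I would verify exactness node by node, repeatedly using the elementary fact --- valid because $k$ is commutative, so that $\del^{0}a,\del^{2}a$ commute and $\del^{1}$ is a homomorphism --- that $\del a=\del b$ iff $b^{-1}*a\in\Alg_H(R,k)$. At $\Alg_H(B,k)$ the map $\pi^{*}$ is injective because $\pi$ is an epimorphism. At $\Alg_H(R,k)$ exactness is precisely the pushout universal property: $g\kappa=\ep$ holds iff $g=h\pi$ for some $h\in\Alg_H(B,k)$, the reverse implication being $h\pi\kappa=h\iota\ep=\ep$. Independence of $\delta$ from the chosen extension (in particular from $u$) is then the key bookkeeping lemma: if $\tilde f,\tilde f'\in\Reg_H(R,k)$ both restrict to $f$ along $\kappa$, then $w:=\tilde f*(\tilde f')^{-1}$ restricts to $\ep$, hence descends to some $\bar\chi\in\Reg_H(B,k)$ with $\bar\chi\pi=w$, and the same manipulation as in the proof of Lemma~\ref{nonab} gives $\del\tilde f=\del^{0}w*\del^{2}w*\del\tilde f'*\del^{1}w^{-1}$, i.e. $[\bar\sigma]=[\bar\sigma']$. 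Exactness at $\Alg_H(K,k)$ follows: if $f=g\kappa$ with $g\in\Alg_H(R,k)$ we may take $\tilde f=g$, so $\delta(f)=[\del g]=[\ep]$ is trivial, while conversely $\delta(f)=[\ep]$ means $\del(fu)=\del(\bar\chi\pi)$ for some $\bar\chi$, whence $g:=(\bar\chi\pi)^{-1}*fu\in\Alg_H(R,k)$ and, after normalising $\bar\chi\iota=\operatorname{id}_k$, satisfies $g\kappa=f$.

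Finally, at $\H^{2}_H(B,k)$ the inclusion $\im\delta\subseteq\ker\pi^{*}$ is immediate, since $\pi^{*}\delta(f)=[\del(fu)]$ is a coboundary on $R$ and hence the distinguished class; for the reverse inclusion, given $\sigma\in Z^{2}_H(B,k)$ with $\sigma(\pi\ot\pi)=\del\eta$ for some $\eta\in\Reg_H(R,k)$, the restriction $f:=\eta\kappa$ satisfies $\del f=(\kappa\ot\kappa)^{*}\del\eta=(\kappa\ot\kappa)^{*}\sigma(\pi\ot\pi)=\ep$ by normalisation of $\sigma$, so $f\in\Alg_H(K,k)$; computing $\delta(f)$ with the extension $\eta$ (legitimate by the independence lemma, applied to $fu$ and $\eta$) gives $\delta(f)=[\sigma]$. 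The naturality ladder for the bosonisations and the injectivity of the comparison map then follow formally from the injectivity properties of the $\psi^{n}$ in Lemma~\ref{nonab}, completing the proof.
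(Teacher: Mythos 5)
Your overall architecture coincides with the paper's: reduce to the bosonisation via the $\psi^n$ of Lemma~\ref{nonab}, define $\delta(f)$ as the class of the descent of $\del(fu)$ through $\pi\ot\pi$, and check exactness node by node using that the $\del^i$ are group homomorphisms and that $\im\del^0$ and $\im\del^2$ commute elementwise (your ``elementary fact'' that $\del a=\del b$ iff $b^{-1}*a\in\Alg_H(R,k)$ is correct and is implicitly the paper's workhorse too). However, there is one genuine gap, and it sits exactly where you lean hardest: your ``bookkeeping lemma'' asserts that if $w\in\Reg_H(R,k)$ satisfies $w\kappa=\ep$ then $w$ descends through $\pi$ to some $\bar\chi\in\Reg_H(B,k)$. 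This is false for a general regular element: $w\kappa=\ep$ only controls $w$ on $\kappa(K)$, whereas descent requires $w$ to vanish on all of $\ker\pi=K^+R+RK^+$, i.e.\ $w(\kappa(a)r)=\ep(a)w(r)$ and $w(r\kappa(a))=w(r)\ep(a)$ for \emph{all} $r$. A regular $w$ supported partly on $K^+R$ with $w\kappa=\ep$ gives a counterexample. Since this lemma is your sole justification for (i) independence of $\delta$ from $u$, (ii) the converse half of exactness at $\Alg_H(K,k)$, and (iii) the converse half of exactness at $\H^2_H(B,k)$, the proof as written does not close.

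The repair is exactly the content of the paper's explicit computations, and it is different in each of the three instances. For independence of $u$, one shows directly that $fu'*fsu$ kills $K^+R$ and $RK^+$, using the $K$-bimodule property of \emph{both} retractions together with $f$ being an algebra map. For exactness at $\Alg_H(K,k)$ one shows $(f\kappa u*fs)(r\kappa(x))=\ep(x)(f\kappa u*fs)(r)$, again using multiplicativity of the extension $f\in\Alg_H(R,k)$. Most delicately, for exactness at $\H^2_H(B,k)$ the extension $\eta$ is merely regular, so before $\eta*(f u)^{-1}$ can descend you must first \emph{derive} the partial multiplicativity $\eta(r\kappa(x))=\eta(r)\eta\kappa(x)$ from the hypothesis that $\del\eta=(\pi\ot\pi)^*\sigma$ already factors through $\pi\ot\pi$ (evaluate $\del\eta$ on $r\ot\kappa(x)$ and use $\del^1\eta=\del^2\eta*\del^0\eta$ there); this step has no analogue in your sketch and cannot be absorbed into a uniform lemma. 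Relatedly, your descent of the 2-cochain $\del(fu)$ itself is only verified at $x=\kappa(a)\ot y$ and then ``propagated''; what is actually needed, and what the paper proves, is that $\del(fu)$ is a $K$-bimodule map, i.e.\ $\del(fu)(\kappa(a)r\ot r')=\ep(a)\del(fu)(r\ot r')$ for arbitrary $r$, which requires running your computation with $\Delta(\kappa(a)r)$ and the $H$-invariance of $f$ and $u$ to untangle the braiding. None of these repairs changes your outline, but each must be supplied separately.
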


\begin{proof} It is clear that $\pi^*:\Alg_H(B,k)\to\Alg_H(R,k)$ is injective and that $\kappa^*\pi^*=(\pi\kappa )^*=(\iota\ep )^*=\ep^*\iota^*$ is the trivial map. Moreover, if $\kappa^*(f)=\ep$ for $f\in\Alg_H(R,k)$ then, by the pushout property, there is a unique $f'\in\Alg_H(B,k)$ such that $\pi^*(f')=f$. To construct $\delta :\Alg_H(K,k)\to \H_H^2(B,k)$ observe first that
\begin{eqnarray*}
\Alg_H(K,k)&=&Z^1_H(K,k)=\H^1_H(K,k)=\{ f\in\Reg_H(K,k)|\del^1f=\del^2f*\del^0f\} \\
&=&\{ f\in\Reg_H(K,k)|\del^0f*\del^2f*\del^1fs=\ep\ot\ep\}.
\end{eqnarray*}
The existence of a $H$-invariant $K$-module coalgebra retraction $u:R\to K$ for the injection $\kappa :K\to R$
implies that, for every $f\in\Alg_H(K,k)$, the map $fu\in\Hom_H(R,k)$ is convolution invertible with inverse $fsu$. Then by Lemma \ref{nonab} the map
$$\sigma_R=\del u^*f=\del^0fu*\del^2fu*\del^1fsu :R\ot R\to k$$
is a convolution invertible 2-cocycle with inverse $\sigma_R^{-1}= \del^1fu*\del^2fsu *\del^0fsu$, in particular $\sigma_R(x\ot y)=fu(x_1)fu(y_1)fsu(x_2y_2)$. It satisfies the 2-cocycle conditions
$$\sigma_R(1\ot\iota )=\ep =\sigma_R(\iota\ot 1)) \ ,\ (\ep\ot\sigma_R)*\sigma_R(1\ot m) = (\sigma_R\ot\ep )*\sigma_R(m\ot 1).$$
Now $(\kappa\ot\kappa )^*\del^iu^*=\del^i\kappa^*u^*=\del^i$ for $i=0,1,2$,
so that $(\kappa\ot\kappa)^*\del fu =\del f=\ep\ot\ep$, since $f:K\to k$ is an algebra map. Moreover, because $u$ is a $H$-invariant $K$-bimodule coalgebra map and $f:K\to k$ is a H-invariant algebra map it follows that
$(fu\ot 1)c=(fu\ot 1)\tau $ and $fm_K=f\ot f$, so that
\begin{eqnarray*}
\del u^*f &=& (\ep\ot fu\ot fu\ot\ep\ot fsum)(\Delta_{R\ot R}\ot 1\ot 1)\Delta_{R\ot R} \\
&=&((fu\ot fu)c\ot fum)\Delta_{R\ot R}= (fu\ot fu\ot fsum)\Delta_{R\ot R} \\
&=& (fu\ot fu\ot fsum)(1\ot c\ot 1)(\Delta_R\ot\Delta_R) \\
&=&(fu\ot fu\ot fsum)(1\ot\tau\ot 1)(\Delta_R\ot\Delta_R)  
\end{eqnarray*}
and $\del fu(xr\ot r')=\ep (x)\del fu(r\ot r')=\del fu (r\ot r'x)$ for all $x\in K$ and $r, r'\in R$, which says that $\del fu:R\ot R\to k$ is a $K$-bimodule map. This means in particular that 
$$\del u^*f(K^+R\ot R + R\ot RK^+)=0$$
and hence that the cocycle $\sigma_R=\del u^*f:R\ot R\to k$ factors uniquely through $\pi\ot\pi :R\ot R\to B\ot B$, i.e: there exists a unique $\sigma :B\ot B\to k$ such that $(\pi\ot\pi )^*\sigma =\del u^*f$. Since $\pi :R\to B$ is a surjective Hopf algebra map, this $\sigma :B\ot B\to k$ is a 2-cocycle as well. So define
$$\delta :\Alg_H(K,k)\to Z_H^2(B,k)$$
by $\delta (f)=\sigma$.

Exactness at $\Alg_H(K,k)$: If $f\in\Alg_H(K,k)$ and $\delta f=\del\chi$ for some $\chi\in\Reg_H(B,k)$ then $\del fu=(\pi\ot\pi )^*\del\chi =\del\pi^*\chi$ and $g=\pi^*\chi^{-1}*fu\in\Reg_H(R,k)$ and $\kappa^*g=\kappa^*(\chi^{-1}\pi *fu)=\chi^{-1}\pi\kappa *f\kappa u =\chi^{-1}\iota\ep *f =\ep *f =f$. It remains to show that $g\in\Alg_H(R,k)$. But $\del g=\ep\ot\ep$, since
\begin{eqnarray*}
\del g&=&\del^0 g *\del^2g *\del^1g^{-1}\\
&=&\del^0(\chi^{-1}\pi *fu) )*\del^2(\chi^{-1}\pi *fu)*\del^1(fsu*\chi\pi ) \\
&=&\del^0\chi^{-1}\pi *\del^0fu*\del^2\chi^{-1}\pi *\del^2fu*\del^1fsu*\del^1\chi\pi \\
&=&\del^0\chi^{-1}\pi *\del^2\chi^{-1}\pi *\del^0fu *\del^2fu*\del^1fsu*\del^1\chi\pi \\
&=&\del^0\chi^{-1}\pi *\del^2\chi^{-1}\pi *\del fu*\del^1\chi\pi\\
&=&\del^0\chi^{-1}\pi *\del^2\chi^{-1}\pi *\del\chi\pi *\del^1\chi\pi\\
&=&\ep\ot\ep ,
\end{eqnarray*}
as $\del^0f'*\del^2f''=(f'\ot f'')c=(f'\ot f'')\tau =f''\ot f'=\del^2f''*\del^0f'$ for $f'\in\Reg_H(R,k)$, so that $g$ is an algebra map.

Conversely, if $f\in\Alg_H(R,k)$ then $\kappa^*f\in\Alg_H(K,k)$, $\del f\kappa u\in Z^2_H(R,k)$, $\delta f\kappa\in Z^2_H(B,k)$ and $(\pi\ot\pi )^*\delta\kappa^*(f)=\del f\kappa u$. Moreover,
\begin{eqnarray*}
(f\kappa u*fs)(r\kappa (x))&=&f\kappa u(r_1(r_2)_{-1}\kappa (x_1))fs((r_2)_0\kappa (x_2)) \\
&=&f\kappa u(r_1)f\kappa ((r_2)_{-1}x_1)fs((r_2)_0\kappa x_2) \\
&=&f\kappa u(r_1)f\kappa (x_1)fs\kappa (x_2)fs (r_2)\\
&=&\ep (x)(f\kappa u*f s )(r)
\end{eqnarray*}
for $r\in R$ and $x\in K$, in particular $(f\kappa u*fs)(RK^+)=0$. Hence, there is a unique $\chi\in\Reg_H(B,k)$ such that $f\kappa u*fs=\chi\pi$, and observe that $\chi$ is convolution invertible since $(f\kappa u*fs)^{-1}(RK^+)=(f*fs\kappa u)(RK^+)=0$ as well. This implies that $f\kappa u=\chi\pi *f$ and
\begin{eqnarray*}
\del f\kappa u&=&\del (\chi\pi *f )=\del^0(\chi\pi *f)*\del^2(\chi\pi *f)*\del^1(\chi\pi *f)^{-1} \\
&=&\del^0\chi\pi *\del^0f*\del^2\chi\pi *\del^2f*\del^1fs*\del^1\chi^{-1}\pi \\
&=&\del^0\chi\pi *\del^2\chi\pi *\del^0f*\del^2f*\del^1fs*\del^1\chi^{-1}\pi \\
&=&\del^0\chi\pi *\del^2\chi\pi *\del f*\del^1\chi^{-1}\pi\\ 
&=&\del\chi\pi,
\end{eqnarray*}
so that $(\pi\ot\pi )^*\delta f\kappa =\del f\kappa u=\del\pi^*\chi =(\pi\ot\pi )^*\del\chi$ and $\delta\kappa^* f=\delta f\kappa =\del\chi$, which is equvalent to $\ep\ot\ep $ under the equivalence relation on $Z^2_H(B,k)$. 

Exactness at $\H^2_H(B,k)$: If $f\in\Alg_H(K,k)$ then $(\pi\ot\pi )^*\delta f=\del fu$, which is equivalent to $\ep\ot\ep$ in $Z^2_H(R,k)$.

Conversely, if $\sigma\in Z^2_H(B,k)$ and $(\pi\ot\pi )^*\sigma =\del f$ for some $f\in\Reg_H(R,k)$ then $\del\kappa^*f=(\kappa\ot\kappa )^*\del f=(\pi\kappa\ot\pi\kappa )^*\sigma =\ep\ot\ep$, so that $\kappa^*f=f\kappa\in\Alg_H(K,k)$, $\del f\kappa u\in Z^2_H(R,k)$ and $\delta (f\kappa )\in Z^2_H(B,k)$. It suffices to prove that $\delta f\kappa$ is equivalent to $\sigma$ in $Z_H^2(B,k)$. Now, since $\del f(RK^+\ot R+R\ot RK^+)=0$ it follows that $\del f(r\ot\kappa (x))=\ep (x)\del f(r\ot 1)+\del f(r\ot (\kappa (x)-\ep(x)))=\ep (x)\del f(r\ot 1)=(\ep\ot\ep )(r\ot\kappa(x))$, which implies that
$$f(r\kappa (x))=\del^1f(r\ot\kappa (x))=\del^2f*\del^0f(r\ot\kappa (x))=f(r)f\kappa (x)$$
for all $r\in R$ and $x\in K$. Then $(f\kappa su*f)(RK^+)=0$, since
\begin{eqnarray*}
(f*f\kappa su)(r\kappa (x))&=&f(r_1(r_2)_{-1}\kappa (x_1))f(\kappa su(r_2)_0\kappa (x_2)) \\
&=&f(r_1)f((r_2)_{-1}\kappa (x_1))f\kappa s(x_2)f\kappa su((r_2)_0) \\
&=&f(r_1)f\kappa (x_1)f\kappa s(x_2)f\kappa su(r_2)\\
&=&\ep (x)(f*f\kappa su)(r),
\end{eqnarray*}
and hence there is a unique $\chi\in\Reg_H(B,k)$ such that $f*f\kappa su=\pi^*\chi$, that is $f=\chi\pi *f\kappa u$. Then
\begin{eqnarray*}
(\pi\ot\pi )^*\sigma &=&\del f=\del^0(\chi\pi *f\kappa u)*\del^2(\chi\pi *f\kappa u)\del^1(\chi\pi *f\kappa u)^{-1} \\
&=&\del^0\chi\pi *\del^0f\kappa u*\del^2\chi\pi *\del^2f\kappa u*\del^1f\kappa su*\del^1\chi^{-1}\pi \\
&=&\del^0\chi\pi *\del^2\chi\pi *\del^0f\kappa u*\del^2f\kappa u*\del^1f\kappa su*\del^1\chi^{-1}\pi \\
&=&\del^0\chi\pi *\del^2\chi\pi *\del f\kappa u*\del^1\chi^{-1}\pi \\
&=&(\pi\ot\pi )^*(\del^0\chi *\del^2\chi *\delta f\kappa *\del^1\chi^{-1} ),
\end{eqnarray*}
since $\del^0f\kappa u*\del^2\chi\pi =\del^2\chi\pi *\del^0f\kappa u$, and
thus
$$\sigma =\del^0\chi *\del^2\chi *\delta f\kappa *\del^1\chi^{-1} ,$$
so that $\sigma$ is equivalent to $\delta f\kappa $ in $Z^2_H(B,k)$. The remaining assertions are now obvious.

Similar and somewhat simpler arguments lead to an exact sequence of pointed sets
\begin{eqnarray*}
\H^1_H(B\# H,k) ^{\pi^*\atop{\longrightarrow}} \H^1_H(R\# H,k) ^{\kappa^*\atop{\longrightarrow}} \H^1_H(K\# H,k) ^{\delta\atop{\longrightarrow}} \H^2_H(B\# H,k) ^{\pi^*\atop{\longrightarrow}} \H^2_H(R\# H,k)
\end{eqnarray*}
for the bosonisations, and the map $\psi^*$ of cosimplicial groups induces an injective map between the two sequences. As an alternative, given the exact sequence for the bosonisations and the map $\psi^*$ the sequence for the braided square also follows directly.

It remains to show that any two $K$-bimodule coalgebra retractions $u, u' :K\to R$ lead to the same connecting map $\delta :\Alg_H(K,k)\to \H_H^2(B,k)$. Observe that $\ker\pi =K^+R+RK^+$. For $f\in\Alg_H(K,k)$ let $\sigma ,\sigma '\in Z_H^2(B,k)$ be such that $(\pi\ot\pi )^*\sigma =\del fu$ and  $(\pi\ot\pi )^*\sigma ' =\del fu'$. If $x\in K^+$ and $r\in R$ then $\Delta (xr)=x_1(x_2)_{-1}r_r\ot (x_2)_0r_2$ and
\begin{eqnarray*}
fu'*fsu(xr) & = & fu'(x_1(x_2)_{-1}r_1)fsu((x_2)_0r_2) \\
& = & f(x_1)f((x_2)_{-1}u(r_1))fs((x_2)_0)fsu(r_2) \\
& = & \ep (x)fu'(r_1)fsu(r_2)=0
\end{eqnarray*}
and a similar argument shows that $fu' *fsu(rx)=0$, so that $\chi =fu' *fsu\in\Reg_H(B,k)$. Moreover, since the faces $\del^i :\Reg_H(R,k)\to \Reg_H(R\ot R,k)$ are group homomorphisms and since $\del^0f' *\del^2f'' =\del^2f'' *\del^0f'$ it follows that
\begin{eqnarray*}{l}
\lefteqn{(\pi\ot\pi )^*(\del^0\chi *\del^2\chi *\sigma *\del^1\chi^{-1})}  \\
& =  & \del^0(fu'*fsu)*\del^2(fu'*fsu)*\del fu *\del^1(fu'*fsu ) \\
& =  & \del^0fu'*\del^2fu' *\del^1fsu' =(\pi\ot\pi )^*\sigma ' .
\end{eqnarray*}
But $(\pi\ot\pi )^*:\Reg_H(B\ot B,k) to\Reg_H(R\ot R,k)$ is injective, so that $\del^0\chi *\del^2\chi *\sigma *\del^1\chi^{-1} =\sigma '$, which means that $\sigma$ and  $\sigma '$ are in the same cohomology class.
\end{proof}

\noindent
{\bf Remark.} For Hochschild cohomology, which in some cases can be viewed as the infinitesimal part of the \lq multiplicative'  cohomology, such a sequence  (now of vector spaces) also exists \cite{GM}. The proofs are similar but somewhat simpler in that case, and the requirement that the retraction $u:R\to K$ be a coalgebra map is not needed.

\section{Applications to the lifting process}

Every lifting of a given diagram of special finite Cartan type is by \cite{GM} a cocycle deformation of the bosonisation $B(V)\# kG$ of the Nichols algebra $B(V)$ and is completely determined by a $G$-invariant algebra map $f:K(V)\to k$. In the presence of a $K(V)$-module coalgebra retraction $u:R(V)\to K(V)$ for the injection $\kappa :K(V)\to R(V)$ the deforming cocycle can be determined via the connecting map $\delta :\Alg_G(K,k)\to \H_G^2(R,k)$ described in the last section.
Observe that in our case, $\Alg_G(B,k)=\Alg_G(R,k)=\{ \ep\}$, and that $\delta$ is injective. The simple root vectors $x_{\alpha}$, where $\alpha\in\Phi^+$ is a simple root, generate $R$ as an algebra.  Moreover,  $f(x_{\alpha})=f(gx_{\alpha})=\chi_{\alpha}(g)f(x_{\alpha})$ for every $g\in G$ and $f\in\Alg_G(R,k)$. It follows that $\Alg_G(R,k)=\{ \ep \}$, since $q_{\alpha}=\chi_{\alpha}(g_{\alpha})$ is a non-trivial root of unity for every simple root $\alpha$.

By \cite{GM} Theorem 2.2 (\cite{AS}, Theorem 2.6) it follows that the map $\vartheta :R\to B\ot K$, given by $\vartheta (x^az^{a'})=x^a\ot z^{a'}$, is a $K$-module isomorphism. The $K$-bimodule retraction
$$u=(\ep\ot 1)\vartheta :R\to K$$
for the injection $\kappa :K\to R$ has kernel $B^+R$ and is a $K$-bimodule map.

\subsection{Type $A_1$} In this case the retraction $u:R\to K$ is a $K$-module coalgebra map, since the obvious injection $v:B\to R$ is a coalgebra map, so that $B^+R$ is a coideal in $R$. The injective map
$$\delta :\Alg_G(K,k)\to\H_G^2(B,k)$$
is given by $\sigma =\delta f=(fu\ot fu)*fsum (v\ot v)=fsum(v\ot v)$, that is $\sigma (x^i\ot x^j)=fsu(x^{i+j})$ and $\sigma^{-1}(x^i\ot x^j)= fu(x^{i+j})$ for $0\le i,j <N$. Using
$$\Delta (x^m\ot x^n)=\sum_{0\le i\le m; 0\le j\le n}{m\choose i}_q{n\choose j}_qx^ig^{m-i}\ot x^jg^{n-j}\ot x^{m-i}\ot x^{n-j}$$ and the identity
$$\sum_{i+j=r}{\m\choose i}_q{n\choose j}_qq^{j(m-i)}={{m+n}\choose r}_q=1$$
of  \cite{Ka} it follows that
$$m_{\sigma}(x^m\ot x^n) = \left \{ \begin{array}{ll}
x^{m+n}, & \mbox{if $m+n<N$} \\
fs(z)x^{m+n-N}(1-g^N), & \mbox{if $m+n\ge N$}
\end{array} \right . $$

\subsection{Quantum planes} The general quantum plane $V=kx_1\oplus kx_2$ has $G$-coaction $\delta (x_i)=g_i\ot x_i$ and $G$-action $gx_i=\chi_i(g)x_i$, where $\chi_1(g_2)\chi_2(g_1)=1$ and $q=\chi_1(g_1)$ is a primitive root of unity of order $N$. Moreover, $\chi_i^N=\ep =\chi_1\chi_2$, so that
$\chi_1(g_i)=q$ and $\chi_2(g_i)=q^{-1}$. In the free Hopf algebra $k<x_1, x_2>$ the relation $x_2x_1=qx_1x_2+z_{21}$, where $z_{21}=[x_2, x_1]=x_2x_1-qx_1x_2$, can be used to construct a PBW-basis. The following Lemma, which will also be used later, is helpful in this connection.

\begin{Lemma} \label{link} For a quantum plane with linkable vertices, i.e: with $\chi_1\chi_2=\ep$, the relations
$$x_2^mx_1^n =\sum_{r=0}^lq^{(m-r)(n-r)}r!_q{m\choose r}_q{n\choose r}_qx_1^{n-r}x_2^{m-r}z_{21}^r +p_{mn}$$
hold in $k<x_1, x_2>$, where $l=\min \{ m,n\}$ and $p_{mn}$ is an element in the ideal generated by $[x_1,z_{21}]$ and $[x_2,z_{21}]$.
\end{Lemma}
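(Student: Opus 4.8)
The plan is to work modulo the two-sided ideal $I$ of $k\langle x_1,x_2\rangle$ generated by $[x_1,z_{21}]$ and $[x_2,z_{21}]$. In the quotient algebra $A=k\langle x_1,x_2\rangle/I$ the element $z_{21}$ is central, and $p_{mn}$ is precisely the error term recording everything discarded in passing to $A$. Hence the assertion $x_2^mx_1^n-(\text{sum})\in I$ is equivalent to the equality of the images in $A$, and it suffices to prove the stated identity in $A$, i.e. with $p_{mn}=0$ and with $z_{21}$ commuting freely past $x_1$ and $x_2$. The whole computation then takes place in the algebra generated by $x_1,x_2$ with central $z_{21}$ and the single relation $x_2x_1=qx_1x_2+z_{21}$.

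First I would settle the case $m=1$ by induction on $n$, namely
$$x_2x_1^n=q^nx_1^nx_2+[n]_qx_1^{n-1}z_{21}.$$
The base $n=1$ is the defining relation of $z_{21}$, and the step writes $x_2x_1^{n+1}=(x_2x_1^n)x_1$, pushes the central $z_{21}$ to the right, and combines the two resulting $z_{21}$-terms by means of $[n+1]_q=[n]_q+q^n$.

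Then I would induct on $m$. Applying $x_2$ on the left of the formula for $x_2^mx_1^n$ and rewriting each factor $x_2x_1^{n-r}$ by the $m=1$ case (with $n$ replaced by $n-r$), every summand splits into one term carrying $z_{21}^r$ and one carrying $z_{21}^{r+1}$. After moving all $z_{21}$'s to the right and reindexing $r\mapsto r+1$ in the second family, the coefficient of $x_1^{n-s}x_2^{m+1-s}z_{21}^s$ becomes
\begin{eqnarray*}
\lefteqn{q^{(m+1-s)(n-s)}s!_q{m\choose s}_q{n\choose s}_q} \\
&& {}+q^{(m-s+1)(n-s+1)}(s-1)!_q{m\choose s-1}_q{n\choose s-1}_q[n-s+1]_q .
\end{eqnarray*}
Using $s!_q=[s]_q(s-1)!_q$ and ${n\choose s-1}_q[n-s+1]_q=[s]_q{n\choose s}_q$, this equals $q^{(m+1-s)(n-s)}s!_q{n\choose s}_q$ multiplied by ${m\choose s}_q+q^{m+1-s}{m\choose s-1}_q$, which by the $q$-Pascal identity ${m+1\choose s}_q={m\choose s}_q+q^{m+1-s}{m\choose s-1}_q$ is exactly the coefficient $q^{(m+1-s)(n-s)}s!_q{m+1\choose s}_q{n\choose s}_q$ required for $x_2^{m+1}x_1^n$.

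The only real difficulty is bookkeeping: tracking the exponent $q^{(m-r)(n-r)}$ correctly through the substitution and the shift $r\mapsto r+1$, and recognizing that the two families collapse via precisely the form ${m+1\choose s}_q={m\choose s}_q+q^{m+1-s}{m\choose s-1}_q$ of the $q$-Pascal rule. The boundary contributions at $r=0$ and $r=l$ are handled automatically, since the spurious terms are killed by the vanishing of the relevant $q$-integers (e.g. $[n-r]_q$ when $r=n$). Finally, all coefficient manipulations are identities in $\mathbb{Z}[q]$, so they persist after specializing $q$ to the primitive $N$-th root of unity; in particular no problem arises from $q$-integers or $q$-binomials that happen to vanish at roots of unity.
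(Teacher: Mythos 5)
Your proof is correct, and while its combinatorial heart is the same as the paper's --- rewriting each $x_2x_1^{n-r}$, splitting every summand into a $z_{21}^r$-term and a $z_{21}^{r+1}$-term, and collapsing the two families via $s!_q=[s]_q(s-1)!_q$, ${n\choose s-1}_q[n-s+1]_q=[s]_q{n\choose s}_q$ and the $q$-Pascal rule ${m+1\choose s}_q={m\choose s}_q+q^{m+1-s}{m\choose s-1}_q$ --- your organization is genuinely different and cleaner. The paper stays in the free algebra $k\langle x_1,x_2\rangle$ throughout and carries the error terms explicitly: it derives recursive formulas for $p_{mn}$ (and a second family $p'_{mn}$), and because the discarded commutators $[x_2^i,z_{21}]$ and $[x_1,z_{21}]$ accumulate on different sides depending on which variable one inducts on first, it splits into the two cases $m\ge n$ and $m\le n$ with the double induction run in opposite orders. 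Your reduction to the quotient $A=k\langle x_1,x_2\rangle/I$, in which $z_{21}$ is central, removes all of that bookkeeping at once --- the assertion that $p_{mn}$ lies in $I$ is by definition the assertion that the identity holds in $A$ --- and a single induction on $m$ (with the $m=1$ base case settled by induction on $n$) then suffices. What the paper's version buys is an explicit recursive description of $p_{mn}$, but nothing downstream uses more than the membership $p_{mn}\in J$, so your streamlined argument loses nothing that is actually needed. Your treatment of the boundary terms (the factor $[n-r]_q$ vanishing at $r=n$, and the absent ${m\choose m+1}_q$ term) and your remark that the coefficient identities live in $\mathbb{Z}[q]$ and hence specialize harmlessly at a root of unity are both correct.
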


\begin{proof} Since the vertices are linkable, we have $z_{21}x_i=x_iz_{21}-[x_i,z_{21}]$. It follows by induction on $m$ that
$$x_2^mx_1=q^mx_1x_2^m+m_qx_2^{m-1}z_{21} -\sum_{i=1}^{m-1}q^ix_2^{m-1-i}[x_2^i,z_{21}]$$
where $[x_2^i,z_{21}]=\sum_{k=1}^{i-1}x^{i-k}[x_2,z_{21}]x_2^{k-1}$, and then, if $m\ge n$, by induction on $n$
$$x_2^mx_1^n =\sum_{r=0}^lq^{(m-r)(n-r)}r!_q{m\choose r}_q{n\choose r}_qx_1^{n-r}x_2^{m-r}z_{21}^r +p_{mn},$$
where $p_{m(n+1)}=p_{mn}x_1+\sum_{r=0}^nq^{(m-r)(n-r)}r!_q{m\choose r}_q{n\choose r}_q(x_1^{n-r}x_2^{m-r}[x_1,z_{21}^r] +p_{(m-r)1}z_{21}^r)$ and $p_{m1}=\sum_{i=1}^{m-1}q^ix_2^{m-i}[x_2^i,z_{21}]$. Here we used the identities ${m\choose {r-1}}{{(m-r+1)_q}\over{r_q}}={m\choose r}_q$ and ${n\choose r}_q+q^{n+1-r}{n\choose{r-1}}_q={{n+1}\choose r}_q$.

On the other hand,  by induction on $n$ we get
$$x_2x_1^n=q^nx_1^nx_2+n_qx_1^{n-1}z_{21} -\sum_{i=1}^{n-1}(n-i)_qx_1^{n-i-1}[x_1,z_{21}]x_1^{i-1}$$
and then, if $m\le n$, by induction on $m$
$$x_2^mx_1^n=\sum_{r=0}^mq^{(m-r)(n-r)}r!_q{m\choose r}_q{n\choose r}_qx_1^{n-r}x_2^{m-r}z_{21}^r -{p'}_{mn}$$
with ${p'}_{(m+1)n}=x_2{p'}_{mn}+\sum_{r=0}^mq^{(m-r)(n-r)}r!_q{m\choose r}{n\choose r}\big( (n-r)_qx_1^{n-r-1}[x_2^{m-r},z_{21}]+{p'}_{1(n-r)}x_2^{m-r}\big) z_{21}^r$ and ${p'}_{1n}=\sum_{i=1}^{n-1}(n-i)_qx_1^{n-i-1}[x_1,z_{21}]x_1^{i-1}$. Here the identities ${n\choose{r-1}}_q{{(n+1-r)_q}\over{r_q}}={n\choose r}_q$ and ${m\choose r}_q+q^{m+1-r}{m\choose{r-1}}_q={{m+1}\choose r}_q$ were used.
\end{proof}

The elements  $x_i^N=z_i$ and $[x_2,x_1]=z_{21}$ are primitive in $k<x_1, x_2>$, and so are $[x_1, z_2]$, 
$[x_2,z_1]$ and $[z_i,z_{21}]$ for
$i=1, 2$. The ideal  generated by the  elements $[x_1, z_2]$, $[x_2,z_1]$, $[z_1, z_{21}]$ and $[z_2, z_{21}]$ in the braided Hopf algebra $k<x_1, x_2>$ is therefore a Hopf ideal, so that
$$R=k<x_1, x_2>/( [x_1, z_2], [x_2,z_1], [z_1, z_{21}], [z_2, z_{21}])$$
is a Hopf algebra in the category of crossed $kG$-modules. It follows from the Lemma above that $[z_2, z_1]=z_2z_1-z_1z_2=0$. Thus, if  $K$ is the Hopf subalgebra of $R$ generated by
$z_1$, $z_2$ and $z_{21}$, then $K=k[z_1, z_2, z_{21}]$ as an algebra, and
$$\begin{CD}
K @>\kappa >> R \\
@V\ep VV  @V\pi VV \\
k @>\iota >>  B
\end{CD}$$
is  a pushout square of braided Hopf algebras, where $B=k<x_1, x_2>/(z_1, z_2, z_{21})$ is the Nichols algebra of the quantum plane. By the Lemma above $R\cong (B\ot K)\oplus J$ as a vector space, where $J$ is the ideal in $R$ generated by $[x_1, z_{21}]$ and $[x_2, z_{21}]$, which is not a Hopf ideal.

\begin{Proposition} For the quantum plane the injection $\kappa :K\to R$ has a $K$-bimodule coalgebra retraction $u :R\to K$ defined by  $u(x^az^b + J)=\ep (x^a)z^b$.
\end{Proposition}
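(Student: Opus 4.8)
The plan is to check, in turn, that $u$ is well defined and $G$-equivariant, that it retracts $\kappa$, that it is a $K$-bimodule map, and finally that it is a coalgebra map; the last property is by far the most delicate. By Lemma~\ref{link} the straightening relations give the vector space decomposition $R\cong(B\ot K)\oplus J$, so $\{x^az^b\}\cup\{\text{basis of }J\}$ is a basis of $R$ and $u$, being prescribed on a basis, is a well-defined $k$-linear map with kernel $N:=\ker u=(B^+\ot K)\oplus J$, where $B^+=\ker\ep_B$. Since $z_1,z_2,z_{21}$ are $G$-homogeneous weight vectors, the basis elements $x^az^b$ and the ideal $J$ are $G$-stable and $u$ is $G$-equivariant. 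The retraction property $u\kappa=\mathrm{id}_K$ is immediate, as $K$ is spanned by the basis elements $x^0z^b=z^b$ on which $u$ is the identity. For $K$-bilinearity I use that $z_1,z_2$ are central in $R$, that $z_{21}$ is central modulo $J$ (since $J$ is generated by $[x_1,z_{21}]$ and $[x_2,z_{21}]$), that $J$ is a two-sided ideal, and that $K=k[z_1,z_2,z_{21}]$ is commutative: for $z^c\in K$ one then has $z^c(x^az^b)\equiv x^az^{b+c}\equiv(x^az^b)z^c$ modulo $J$, whence $u(z^cr)=z^cu(r)$ and $u(rz^c)=u(r)z^c$. The counit identity $\ep_K u=\ep_R$ holds because $\ep(x^az^b)=\ep(x^a)\ep(z^b)$ while $J\subseteq\ker\ep_R$, as $J$ is generated by commutators.

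It remains to prove $(u\ot u)\Delta_R=\Delta_K u$, and for this it suffices to show that $N=\ker u$ is a coideal. Indeed $K$ is a subcoalgebra of $R$, being generated by the primitives $z_1,z_2,z_{21}$, and $u$ is a linear projection onto $K$. Writing $r=u(r)+n$ with $n\in N$, one has $(u\ot u)\Delta_R(u(r))=\Delta_R(u(r))=\Delta_Ku(r)$ because $\Delta_R(u(r))\in K\ot K$ and $u|_K=\mathrm{id}_K$; hence $(u\ot u)\Delta_R=\Delta_K u$ holds if and only if $(u\ot u)\Delta_R(n)=0$ for all $n\in N$, i.e. if and only if $\Delta_R(N)\subseteq\ker(u\ot u)=N\ot R+R\ot N=:\tilde N$. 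The condition $\ep(N)=0$ is already noted, so everything comes down to proving $\Delta_R(N)\subseteq\tilde N$.

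For the summand $B^+\ot K$ I first record the structural fact that the PBW inclusion $v:B\to R$, $x^a\mapsto x^a$, is a coalgebra map; equivalently $\Delta_R(x^a)=\sum_{a'+a''=a}c_{a',a''}\,x^{a'}\ot x^{a''}$ with scalar (in fact $q$-binomial) coefficients and no $z$- or $J$-corrections. This follows from the braided binomial expansion of $\Delta_R(x_1)^{a_1}\Delta_R(x_2)^{a_2}$: in each tensor factor the powers of $x_1$ stay to the left of the powers of $x_2$ and all exponents remain $<N$, so no straightening and no reduction $x_1^N=z_1$, $x_2^N=z_2$ ever occurs. Granting this, every term of the braided product $\Delta_R(x^az^b)=\Delta_R(x^a)\Delta_R(z^b)$ has the form $(\text{scalar})\,x^{a'}z^{b'}\ot x^{a''}z^{b''}$ with $a'+a''=a$ (the braiding past the weight vectors $z^{b'}$ contributes only a scalar); since $a\ne0$, either $a'\ne0$ or $a''\ne0$, and because $a',a''\le a$ all exponents are $<N$, so the corresponding factor is a basis element of $B^+\ot K\subseteq N$. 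Hence $\Delta_R(x^az^b)\in\tilde N$ whenever $a\ne0$.

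For the summand $J$ I begin with the ideal generators $w_i=[x_i,z_{21}]$. Using that $x_i$ and $z_{21}$ are primitive and that $z_{21}$ has $G$-degree $g_1g_2$, a direct braided computation gives $\Delta_R(w_i)=w_i\ot1+1\ot w_i+\lambda_i\,x_i\ot z_{21}$ for a scalar $\lambda_i$ (e.g. $\lambda_1=1-q^2$); all three summands lie in $\tilde N$, the first two because $w_i\in J\subseteq N$ and the last because $x_i\in B^+\subseteq N$. The main obstacle is to pass from these generators to the whole ideal $J$: one cannot simply invoke multiplicativity of $\Delta_R$, since $\tilde N$ is not an ideal of $R\ot R$ (with its braided product) because $N=\ker u$ is not an ideal of $R$ (for instance $x_1\in N$ but $x_1^N=z_1\notin N$). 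I will instead argue by induction on the $\mathbb N$-grading with $\deg x_i=1$, under which all defining relations of $R$ and the generators of $J$ are homogeneous, so that $J$ and $N$ are graded. For a homogeneous spanning element $rw_ir'$ of $J$ I expand $\Delta_R(rw_ir')=\Delta_R(r)\Delta_R(w_i)\Delta_R(r')$ and straighten each resulting tensor factor into the PBW basis by means of Lemma~\ref{link}; I expect each term either to display a factor already in $N$ or to reduce, via these relations, to a coproduct of an element of $J$ of strictly lower degree, to which the inductive hypothesis applies. Carrying out this bookkeeping is the genuinely computational heart of the proof; once it yields $\Delta_R(J)\subseteq\tilde N$, the subspace $N$ is a coideal and $u$ is the desired $K$-bimodule coalgebra retraction.
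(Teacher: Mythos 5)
Your strategy is the same as the paper's: everything is reduced to showing that $\ker u=(B^+\ot K)\oplus J$ is a coideal, and your treatment of well-definedness, the retraction and bimodule properties, the counit, the summand $B^+\ot K$, and the ideal generators $w_i=[x_i,z_{21}]$ is correct (the paper itself offers no argument whatsoever for the coideal claim, so your attempt is already more substantial than the printed proof). But the step you defer --- propagating $\Delta_R(J)\subseteq N\ot R+R\ot N$ from the generators to the whole ideal by induction --- is a genuine gap, and it is not merely unexecuted bookkeeping: the containment fails. Take $n=x_1^{N-1}w_1\in J$. Using your own formula $\Delta_R(w_1)=w_1\ot 1+1\ot w_1+(1-q^2)\,x_1\ot z_{21}$ and $\Delta_R(x_1^{N-1})=\sum_i{N-1\choose i}_q\,x_1^i\ot x_1^{N-1-i}$, apply $u\ot u$ to the product: every term carrying a factor $w_1$ lands in $J\ot R+R\ot J$ and dies, every term $q^{N-1-i}x_1^{i+1}\ot x_1^{N-1-i}z_{21}$ with $i<N-1$ dies because $u(x_1^{i+1})=\ep(x_1^{i+1})=0$ for $1\le i+1\le N-1$, but the single term with $i=N-1$ gives $(1-q^2)\,x_1^N\ot z_{21}=(1-q^2)\,z_1\ot z_{21}$, on which $u\ot u$ is the identity. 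Hence $(u\ot u)\Delta_R(n)=(1-q^2)\,z_1\ot z_{21}\ne 0$ while $u(n)=0$, since $q^2\ne 1$ when $N>2$. Equivalently, for the monomial $r=x_1^{N-1}z_{21}x_1$ one has $u(r)=z_1z_{21}$, so $\Delta_Ku(r)$ contains $z_1\ot z_{21}$ with coefficient $1$, whereas the only contribution of $\Delta_R(r)=\Delta(x_1)^{N-1}\Delta(z_{21})\Delta(x_1)$ to that position survives with the braiding scalar $\chi_1(g_1g_2)=q^2$. This is precisely the obstruction you flag ($x_1\in N$ but $x_1^N=z_1\notin N$): it defeats not only the multiplicative argument but the inductive one as well.

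So your candour about not carrying out the ``computational heart'' is well placed --- that computation cannot be completed for the map $u$ as defined, and the coideal claim (asserted without proof in the paper) is the precise point of failure. Note that the paper runs into exactly this phenomenon in its type $A_2$ section, where the naive retraction $(\ep\ot 1)\vartheta$ fails to be a coalgebra map on elements with $m+n=N=n+l$ and must be replaced by a corrected $u_2$; the quantum-plane statement evidently needs an analogous repair (a modified retraction, or a different construction of the connecting map $\delta$), and any completion of your write-up would have to supply it rather than the deferred induction.
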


\begin{proof} It is clear that the lineaer map $u:R\to K$ defined by $u(x^az^b+J)=\ep (x^a)z^b$ satisfies $u\kappa = 1_K$. It is a $K$-bimodule map, since in $R$ we have $[x_i, z_j]=0$ and $[x_i,z_{21}]\in J$. It is also a coalgebra map, since its kernel $\ker u =(B^+\ot K)\oplus J$ is a coideal.
\end{proof}

Theorem \ref{5-term} is therefore applicable and, since $\Alg_G(R,k)=\{ \ep \}$, it follows that the connecting map $\delta :\Alg_G(K,k)\to \H^2_G(B,k)$ is injective (see proof of Proposition \ref{injective}). It is determined by $(\pi\ot\pi )^*\delta f=\del (fu)=(fu\ot fu)*fsum_R$ and, since the obvious injection $v:B\to R$ is a coalgebra map, we see that
$$\sigma (x^a\ot x^b)=\del (fu)(x^a\ot x^b)=fsu (x^ax^b)$$
for $0\le a_i, b_i <N$, where the cocycle $\sigma =\del (fu)(v\ot v)$ represents the cohomology class $\delta f\in\H_G^2(B,k)$. In particular, in view of the definition of $u$ and Lemma \ref{link},
$$\sigma (x_i^m\ot x_j^n)=\left\{ \begin{array}{ll}
\delta^i_j\delta^{m+n}_Nfs(z_i)) & \mbox{, if $i\le j$} \\
\delta^m_nn!_qfs(z_{21}^n) & \mbox{, if $i=2> j=1$}
\end{array} \right . $$
for $0\le m,n <N$.
Here is the connection to Hochschild cohomology, a result  which is also applicable in a more general context. Recall first that by  \cite{GM} there is a Kunneth type isomorphism in equivariant Hochschild cohomology
$$H^2_G(B,k)\cong H^2_G(B_1,k)\oplus H^2_G(B_2,k)\oplus (H^1(B_1,k)\ot H^1(B_2,k))_G,$$
where $B_i=k[x_i]/(x_i^N)$ are Nichols algebras of quantum lines and $H^1(B_i,k)=\Der (B_i,k)\cong\Hom ({B_i^+}/{(B_i^+)^2}, k)$. This means that every $\zeta\in H_G^2(B,k)$ has a unique decomposition of the form $\zeta =\zeta_1+\zeta_2+\zeta_{21}$. The following result has also been obtained recently with somewhat different methods  in \cite{ABM}, section 5. 

\begin{Theorem} \label{q-plane} For any quantum plane the diagram
$$\begin{CD}
\Der_G(K,k) @>\delta_{Hoch} >> H^2_G(B,k) \\
@V\exp VV  @V\Exp_qVV \\
\Alg_G(K,k) @>\delta >> \H^2_G(B,k)
\end{CD}$$
commutes if $\exp (d )=e^d$ and  $\Exp_q(\zeta )=e_q^{\zeta_1}*e_q^{\zeta_2}*e_q^{\zeta_{21}}$, where $e^d=\sum_{n\ge 0}{{d^n}\over{n!}}$ and  $\exp_q (\xi )=e_q^{\xi} =\sum_{n\ge 0}{{\xi^n}\over {n!_q}}$ are the convolution exponential and $q$-exponential, respectively.
\end{Theorem}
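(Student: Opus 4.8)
The plan is to prove commutativity by comparing explicit cocycle representatives of $\Exp_q\delta_{Hoch}(d)$ and $\delta\exp(d)$ on the PBW basis $\{x_1^mx_2^n\mid 0\le m,n<N\}$ of $B$. By Theorem \ref{5-term} and the explicit retraction $u(x^az^b+J)=\ep(x^a)z^b$, together with the coalgebra section $v:B\to R$, both connecting maps are computed by lifting to $R$, applying the relevant coboundary, and restricting along $v\ot v$. Since $uv=\io\ep$, one has $fu(v(b))=\ep(b)$, so on the non-abelian side $\sigma=\delta f$ is represented by $\sigma(x^a\ot x^b)=fsu(x^ax^b)$ with $f=\exp(d)=e^d$, whence $fs=e^{-d}$; on the Hochschild side $\delta_{Hoch}(d)$ is represented by the additive coboundary of $du$, which on $v\ot v$ reduces (up to sign convention) to $du(x^ax^b)$. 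In both cases the product $x^ax^b$ must first be rewritten in the PBW basis via Lemma \ref{link}, after which $u$ extracts the $K$-component. Thus the entire comparison is driven by the expansion in that Lemma.

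First I would pin down the three K\"unneth components of $\zeta=\delta_{Hoch}(d)$. Writing $a_1=d(z_1)$, $a_2=d(z_2)$, $a_{21}=d(z_{21})$ (the only data in a $G$-invariant derivation of $K=k[z_1,z_2,z_{21}]$, with $a_{21}\ne 0$ only for linkable vertices), the functional $du$ sees precisely the $z$-linear part of $x^ax^b$. From $x_i^mx_i^n=x_i^{m+n}$ and $u(x_i^N)=z_i$ one reads off that $\zeta_i$ is $a_i$ times the standard generator of $H^2_G(B_i,k)$ detecting the root relation $x_i^N=z_i$, while the $r=1$ term $r!_qx_1^{n-r}x_2^{m-r}z_{21}^r$ of Lemma \ref{link} shows that $\zeta_{21}$ is $a_{21}$ times the linking generator in $(H^1(B_1,k)\ot H^1(B_2,k))_G$. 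This gives the decomposition $\zeta=\zeta_1+\zeta_2+\zeta_{21}$ explicitly in terms of normalized generators.

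Next I would compute the three $q$-exponentials on the PBW basis and assemble them. Here the coproduct $\Delta(x^m\ot x^n)=\sum{m\choose i}_q{n\choose j}_q\,x^ig^{m-i}\ot x^jg^{n-j}\ot x^{m-i}\ot x^{n-j}$ together with the $q$-Vandermonde identity $\sum_{i+j=r}{m\choose i}_q{n\choose j}_qq^{j(m-i)}={m+n\choose r}_q$ of \cite{Ka} are exactly what convert the convolution powers $\zeta_i^{*n}$ into the $q$-factorial combinations; this is the mechanism already visible in the Type $A_1$ computation, where $fsu$ assembles the diagonal cocycle with value $fs(z_i)$ exactly on $m+n=N$. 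For the diagonal factors this shows $e_q^{\zeta_i}$ reproduces the pure-power values $\sigma(x_i^m\ot x_i^n)=\delta^{m+n}_N fs(z_i)$ with $fs(z_i)=e^{-d}(z_i)=-a_i$; the off-diagonal factor $e_q^{\zeta_{21}}$ must likewise reproduce $\sigma(x_2^m\ot x_1^n)=\delta^m_n\,n!_q\,fs(z_{21}^n)=\delta^m_n\,n!_q(-a_{21})^n$, using that $fs=e^{-d}$ is an algebra map on the commutative $K$. The prescribed order $e_q^{\zeta_1}*e_q^{\zeta_2}*e_q^{\zeta_{21}}$ is chosen so as to match the PBW ordering $x_1^ax_2^b$ and reconstruct $\sigma(x^a\ot x^b)=fsu(x^ax^b)$ on all of $B\ot B$, not merely on single generators.

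The main obstacle will be precisely this last matching for the linking term. The diagonal $A_1$ identities reduce to one application of $q$-Vandermonde and are essentially the content of the Type $A_1$ subsection, so they are under control. What requires genuine care is verifying that the convolution $q$-exponential of $\zeta_{21}$, multiplied in the stated order against the diagonal factors, reproduces exactly the $z_{21}^r$-contributions in the PBW expansion of $x^ax^b$ with their coefficients $q^{(m-r)(n-r)}r!_q{m\choose r}_q{n\choose r}_q$ after application of $fsu$. This amounts to tracking the braided coproduct through the triple convolution product and establishing a $q$-combinatorial identity generalizing $q$-Vandermonde to the presence of the linking variable $z_{21}$; once that identity is secured, the two representatives agree, hence so do their classes in $\H^2_G(B,k)$, and the diagram commutes.
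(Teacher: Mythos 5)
Your overall strategy coincides with the paper's: both arguments use that $\exp$ is an isomorphism onto $\Alg_G(K,k)$ because $K=k[z_1,z_2,z_{21}]$ is polynomial, decompose $\zeta=\delta_{Hoch}d$ into its three K\"unneth components, and compare explicit representatives $\sigma(x^a\ot x^b)=fsu(x^ax^b)$ computed through the retraction $u$ and the PBW expansion of Lemma \ref{link}. The diagonal values $\delta^{m+n}_Nfs(z_i)$ and the pure-linking values $\delta^m_n n!_q(-a_{21})^n$ you record agree with the paper's formulas.

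The gap sits exactly where you place your ``main obstacle'': the matching for mixed products $x_1^kx_2^m\cdot x_1^nx_2^l$ is deferred to an unproven ``$q$-combinatorial identity generalizing $q$-Vandermonde in the presence of $z_{21}$'', with no indication of how to establish it; as stated the proof is therefore incomplete at its hardest point. The paper sidesteps this identity by a structural observation you are missing: because the cofaces $\del^i:\Reg_G(R,k)\to\Reg_G(R\ot R,k)$ are algebra maps and $\del(fu)(v\ot v)=\del^1(fsu)(v\ot v)$, the connecting map is multiplicative with respect to the convolution decomposition $f=f_1*f_2*f_{21}$, i.e.\ $\delta f=\delta f_1*\delta f_2*\delta f_{21}$ (the paper also verifies this by writing out $\delta f$ on the full PBW basis). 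Granting that, the whole theorem reduces to the three separate identities $e_q^{\delta_{Hoch}d_i}=\delta e^{d_i}$ for $i\in\{1,2,21\}$, and each of these is a short computation of convolution powers $(d_isum)^t$, which are supported in a single degree ($t=1$ for $d_1,d_2$, and $t=n$ on $x_2^n\ot x_1^n$ for $d_{21}$), so no generalized Vandermonde identity is ever needed. You should either supply that reduction or actually prove the identity you postulate; note also that the factors genuinely fail to commute ($\delta f_2*\delta f_{21}\ne\delta f_{21}*\delta f_2$, as the paper's closing remark computes), so the ``choice of order to match the PBW ordering'' cannot be treated as a formality.
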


\begin{proof} It is clear that $\exp :\Der_G(K,k)\to\Alg_G(K,k)$, given by the convolution power series $\exp (d)=e^d=\sum_{n\ge 0}{{d^n}\over{n!}}$, is an isomorphism of abelian groups, since the Hopf algebra $K=k[z_1, z_2, z_{21}]$ is a polynomial algebra. By \cite{GM} there is a Kunneth type isomorphism in equivariant Hochschild cohomology
$$H^2_G(B,k)\cong H^2_G(B_1,k)\oplus H^2_G(B_2,k)\oplus (H^1(B_1,k)\ot H^1(B_2,k))_G,$$
where $B_i=k[x_i]/(x_i^N)$ are Nichols algebras of quantum lines and $H^1(B_i,k)=\Der (B_i,k)\cong\Hom ({B_i^+}/{(B_i^+)^2}, k)$. The connecting map $\delta_{Hoch}:\Der_G(K,k)\to H_G^2(B,k)$ is an isomorphism, since $\Der_G(R,k)=0$ and simce $\dim \Der_G(K,k)=3=\dim H_G^2(B,k)$. The connecting map $\delta :\Alg_G(K,k)\to \H_G^2(B,k)$ is injective, as mentioned above, since $\Alg_G(R,k)=\{ \ep \}$. Moreover, every element $d\in\Der_G(K,k)$ has a unique expression of the form $d=d_1+d_2+d_{21}$, every $f\in\Alg_G(K,k)$ is uniquely of the form $f_1*f_2*f_{21}$, where the notation is self explanatory, and $e^d=e^{d_1}*e^{d_2}*e^{d_{21}}$. For a general $f=f_1*f_2*f_{21}\in\Alg_G(K,k)$ one obtains the formula
\begin{eqnarray*}
\delta f(x_1^kx_2^m\ot x_1^nx_2^l) & = & \delta^k_0\delta^l_0\delta^m_nn!_qfs(z_{21})^n \\
& + & \delta^k_0\delta^{m+l-N}_nn!_q{m\choose n}_qfs(z_2)fs(z_{21})^n \\
& + & \delta^l_0\delta^m_{k+n-N}m!_q{n\choose m}_qfs(z_1)fs(z_{21})^m \\
& + & \delta^{m+l}_{k+n}\delta^{k+n-N}_rq^{(N-l)(N-k)}r!_q{m\choose r}_q{n\choose r}_qfs(z_1)fs(z_2)fs(z_{21})^r \\
& = & \delta f_1*\delta f_2*\delta f_{21}(x_1^kx_2^m\ot x_1^nx_2^l),
\end{eqnarray*}
where $\delta f=\delta f_1*\delta f_2*\delta f_{21}$ also follows directly from the fact that the cofaces $\del^i:\Reg_G(R,k)\to\Reg_G(R\ot R,k)$ are algebra maps and that $\del (fu)(v\ot v)=\del^1(fu)(v\ot v)$. This formula shows in particular that
\begin{eqnarray*}
\delta e^{d_1}(x_1^kx_2^m\ot x_1^nx_2^l) & = & \delta^m_0\delta^l_0d_1s(z_1) \\
\delta e^{d_2}(x_1^kx_2^m\ot x_1^nx_2^l) & = & \delta^k_0\delta^n_0d_2s(z_2) \\
\delta e^{d_{21}}(x_1^kx_2^m\ot x_1^nx_2^l) & = & \delta^k_0\delta^l_0\delta^m_n n!_qd_{21}s(z_{21})^n
\end{eqnarray*}
On the other hand, drawing on Lemma \ref{link} again, for $d=d_1, d_2, d_{21}$ compute
$$e_q^{\delta_{Hoch}d}=e_q^{dsum}=\sum_{t\ge 0}{{(dsum)^t}\over{t!_q}}$$
by evaluating the convolution powers $(dsum)^t(x_1^kx_2^m\ot x_1^nx_2^l)$ for $t>0$ to get
\begin{eqnarray*}
(d_1sum)^t(x_1^kx_2^m\ot x_1^nx_2^l) & = & \delta^s_1\delta^m_0\delta^l_0\delta^{k+n}_Nd_1s(z_1) \\
(d_2sum)^t(x_1^kx_2^m\ot x_1^nx_2^l) & = &\delta^s_1 \delta^k_0\delta^n_0\delta^{m+l}_Nd_2s(z_2) \\
(d_{21}sum)^t(x_1^kx_2^m\ot x_1^nx_2^l) & = & \delta^k_0\delta^l_0\delta^m_n\delta^t_n(n!_q)^2(d_{21}s(z_{21}))^n
\end{eqnarray*}
and therefore $e_q^{\delta_{Hoch}d}=\delta e^d$ for the specified derivations. This means that the map $\Exp_q:H^2_G(B,k)\to \H^2_G(B,k)$ is given by $\Exp_q(\zeta)=e_q^{\zeta_1}*e_q^{\zeta_2}*e_q^{\zeta_{21}}$.
\end{proof}

\noindent
{\bf Remark:} Observe that in general $\delta f_1$ and $\delta f_2$ do not commute with $\delta f_{21}$, since for example $\delta f_2*\delta f_{21}(x_2^2\ot x_1x_2^{N-1})=q^{-1}(1+q)f_2(z_2)f_{21}(z_{21})$ and $\delta f_{21}*\delta f_2(x_2^2\ot x_1x_2^{N-1})=(1+q)f_{21}(z_{21})f_2(z_2)$, so that in general $Exp_q(\zeta )\ne e_q^{\zeta}$ (see also \cite{ABM}). But, if
$\zeta_{21}=0$, that is $f_{21}=\ep $, then $\Exp_q(\zeta )=e_q^{\zeta}$, since $\delta f_2*\delta f_1=\delta f_1*\delta f_2$, a result already obtained in \cite{GM}.

\subsection{Linking} Let $V=kx_1\oplus\ldots\oplus kx_{\theta}$ be any special diagram of finite Cartan type, and suppose that $i<j$ is a linkable pair, i.e: $\chi_i\chi_j=\ep $. Then $i$ and $j$ are in different components of the Dynkin diagram, and they are not linkable to any other vertices. Let $B=TV/I$ be the Nichols albebra of $V$, where $I$ is the ideal generated by the usual set $S$. If $S_{ij}=S\setminus \{ z_{ji}\}$, where $z_{ji}=[x_j,x_i]$, then the ideal $I_{ij}$ in $TV$ generated by $S_{ij}$ is still a Hopf ideal and $R_{ij}=TV/I_{ij}$ is a braided Hopf algebra. The kernel of the canonical projection $\pi :R_{ij}\to B$ is the ideal generated by $z_{ji}$, which is a Hopf ideal, since $z_{ji}$ is primitive.  If $K_{ij}$ is the Hopf subalgebra of $R_{ij}$ generated by $z_{ji}$ then
$$\begin{CD}
K_{ij} @>\kappa >> R_{ij} \\
@V\ep VV  @V\pi VV \\
k @>\iota >> B
\end{CD}$$
is a pushout square. Moreover, as a vector space $R_{ij}\cong (B\ot K_{ij})\oplus J{ij}$, where $J_{ij}$ is the ideal generated by the set $\{ [x_k,z_{ji}] |1\le k\le \theta\}$, which is not a Hopf ideal.

\begin{Proposition} For any special diagram of finite Cartan type and any linkable pair of verices $i<j$ in its Dykin diagram, the linear map $u:R_{ij}\to K_{ij}$, given by $u(x^a\ot z_{ji}^n+J_{ij})=\ep (x^a)z_{ij}^n$, is a  $K_{ij}$-bimodule coalgebra retraction for the inclusion $\kappa :K_{ij}\to R_{ij}$.
\end{Proposition}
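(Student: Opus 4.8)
The plan is to follow the template of the quantum-plane Proposition above, of which this statement is the general linking analogue, and to verify the three defining properties of a $K_{ij}$-bimodule coalgebra retraction separately: that $u$ is a well-defined left inverse of $\kappa$, that it is a $K_{ij}$-bimodule map, and that it is a coalgebra map. Well-definedness and the retraction property $u\kappa=1_{K_{ij}}$ are immediate from the vector-space decomposition $R_{ij}\cong(B\ot K_{ij})\oplus J_{ij}$ recorded just above: $u$ is declared to annihilate the summand $J_{ij}$ and to act as $\ep\ot 1$ on $B\ot K_{ij}$, and since $\kappa(z_{ji}^n)$ corresponds to $1\ot z_{ji}^n$ (the case $x^a=1$), one gets $u\kappa(z_{ji}^n)=\ep(1)z_{ji}^n=z_{ji}^n$.

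Next I would check that $u$ is a $K_{ij}$-bimodule map. Because $K_{ij}$ is generated as an algebra by the single element $z_{ji}$, it suffices to check compatibility with left and right multiplication by $z_{ji}$. The two facts needed are that $J_{ij}$ is an ideal with $u(J_{ij})=0$, and that $[x_k,z_{ji}]\in J_{ij}$ for every generator $x_k$, by the very definition of $J_{ij}$. The second fact lets me commute $z_{ji}$ past an arbitrary PBW monomial $x^a$ at the cost of terms in $J_{ij}$, so $z_{ji}x^a\equiv x^a z_{ji}\pmod{J_{ij}}$; applying $u$ and using that it kills $J_{ij}$ gives $u(z_{ji}\,x^a z_{ji}^n)=\ep(x^a)z_{ji}^{n+1}=z_{ji}\,u(x^a z_{ji}^n)$, and symmetrically on the right. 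Hence $u$ intertwines the left and right $K_{ij}$-actions.

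The coalgebra property is the crux, and I expect it to be the main obstacle. As in the quantum-plane case I would prove it by showing that $\ker u=(B^+\ot K_{ij})\oplus J_{ij}$ is a coideal; since $\kappa$ is an inclusion of Hopf subalgebras with $u\kappa=1_{K_{ij}}$ and $z_{ji}$ is primitive, this identifies $u$ with the quotient projection $R_{ij}\to R_{ij}/\ker u\cong K_{ij}$ and so makes it a coalgebra map. That $\ep(\ker u)=0$ is clear. For $\Delta(\ker u)\subseteq\ker u\ot R_{ij}+R_{ij}\ot\ker u$ I would treat the two summands of a generating set separately. The summand $B^+\ot K_{ij}$ is controlled by the facts that $B^+$ is a coideal of $B$ and that $K_{ij}$ is a sub-bialgebra with $z_{ji}$ primitive, so every term of $\Delta(x^a z_{ji}^n)$ with $x^a\in B^+$ carries a tensor factor in $B^+\ot K_{ij}$. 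The delicate part is the summand $J_{ij}$, which is \emph{not} a Hopf ideal: here I would compute $\Delta[x_k,z_{ji}]$ using that $z_{ji}$ is primitive and that each $x_k$ is (braided-)primitive with diagonal braiding against $z_{ji}$. The outcome is that $[x_k,z_{ji}]$ is primitive up to cross terms of the shape $x_k\ot(\cdots)$ and $(\cdots)\ot x_k$; since the factor $x_k$ lies in $B^+\ot K_{ij}\subseteq\ker u$, these corrections land in $\ker u\ot R_{ij}+R_{ij}\ot\ker u$. In other words, the failure of $J_{ij}$ to be a coideal is exactly absorbed by the larger summand $B^+\ot K_{ij}$, which is precisely why one must work with the full kernel $\ker u$ rather than with $J_{ij}$ alone. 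Establishing this absorption cleanly — keeping track of the diagonal braiding coefficients in $\Delta[x_k,z_{ji}]$ and confirming that every correction term has a factor in $\ker u$ — is the step that requires the most care.
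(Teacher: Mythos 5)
Your proof follows the paper's argument exactly: the paper likewise verifies $u\kappa=1_{K_{ij}}$, deduces the bimodule property from the fact that $[x^a,z_{ji}]\in J_{ij}$ for every $x^a\in B$, and obtains the coalgebra property from the assertion that $\ker u=(B^+\ot K_{ij})\oplus J_{ij}$ is a coideal in $R_{ij}$. The paper states these three points without further elaboration, so your expanded discussion (in particular of how the cross terms of $\Delta[x_k,z_{ji}]$ are absorbed by $B^+\ot K_{ij}$) only adds detail along the same route.
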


\begin{proof} It is clear that the  $u:R_{ij}\to K_{ij}$ just defined is a linear map satisfying $u\kappa =1_{K_{ij}}$. It is a $K_{ij}$-bimodule map, since in $R_{ij}$ the element $[x^a, z_{ji}]$ is in $J_{ij}$ for every $x^a\in B$. It is a coalgebra map, since $(B^+\ot K_{ij})\oplus J_{ij}$is a coideal in $R_{ij}$.
\end{proof}

Our Theorem \ref{link} and the corresponding result for Hochschild cohomology are therefore applicable. Since, as an algebra, $R_{ij}$ is generated by the set  $\{ x_l| 1\le l\le\theta \}$, and since  the $\chi_l(x_l)$ are non-trivial roots of unity, we conclude that $\Der_G(R_{ij},k)=0$ and $\Alg_G(R_{ij},k)= \{ \ep \}$. Moreover, for the polynomial Hopf algebra $K_{ij}=k[z_{ji}]$, the convolutian exponential map $\exp :\Der_G(K_{ij},k)\to \Alg_G(K_{ij},k)$ is an isomorphism of groups, and the diagram
$$\begin{CD}
\Der_G(k_{ij},k) @>\delta_{Hoch} >>  H^2_G(B,k) \\
@V\exp VV  @. \\
\Alg_G(K_{ij},k) @>\delta >> \H^2_G(B,k)
\end{CD}$$
carries some information. In this generality there is no obvious map relating $H^2_G(B,k)$ to $\H^2_G(B,k)$, but the diagram relates the image of $\delta_{Hoch}$ to $\H^2_G(B,k)$. More precisely, by the Kunneth formula for the equivariant Hochschild cohomology of Nichols algebras, $\im \delta_{Hoch}\subseteq (\Der (B_i,k)\ot\Der (B_j,k))_G$, where $B_i$ and $B_j$ are the Nichols algebras of the components of the Dynkin diagram containing the vertices $i$ and $j$, respectively. 

\begin{Corollary} Let $i<j$ be a linkable pair of vertices in a special diagram of finite Cartan type. For the derivation $d\in \Der_G(K_{ij},k)$ the Hochschild cocycle representing $\zeta =\delta_{Hoch}d\in H^2_G(B,k)$ is given by $\zeta (x^a\ot x^b)=\delta^a_{e_j}\delta^b_{e_i}d(z_{21})$ and $e_q^{\zeta }=\delta e^d\in \H^2_G(B,k)$
\end{Corollary}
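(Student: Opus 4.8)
The plan is to prove the final Corollary, which concerns a single linkable pair $i<j$ in a special diagram of finite Cartan type, by specializing the machinery developed in the preceding sections to the pushout square for $K_{ij}\subset R_{ij}$. Let me sketch how I would establish both the explicit Hochschild cocycle formula for $\zeta=\delta_{Hoch}d$ and the exponential relation $e_q^{\zeta}=\delta e^d$.

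\medskip

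First I would compute the Hochschild connecting cocycle explicitly. Since $K_{ij}=k[z_{ji}]$ is a polynomial algebra on the single primitive generator $z_{ji}=[x_j,x_i]$, the derivation $d\in\Der_G(K_{ij},k)$ is determined by the scalar $d(z_{ji})$. The Hochschild connecting map is $\delta_{Hoch}d=dsum_R$ composed with the retraction $u$ (exactly as in the quantum-plane computation, where $\delta_{Hoch}d=dsum$). Using the $K_{ij}$-bimodule coalgebra retraction $u(x^a\ot z_{ji}^n+J_{ij})=\ep(x^a)z_{ji}^n$ from the Proposition just proved, together with the linking relation of Lemma \ref{link} applied to the linkable pair $i<j$, the only surviving term in evaluating $\delta_{Hoch}d$ on $x^a\ot x^b$ is the one where $x^a=x_j$ and $x^b=x_i$, producing the single commutator $z_{ji}$. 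This yields $\zeta(x^a\ot x^b)=\delta^a_{e_j}\delta^b_{e_i}d(z_{ji})$, where $e_i, e_j$ denote the standard basis exponent vectors. The key input here is that $i,j$ lie in different Dynkin components and are not linkable to any other vertex, so all cross-terms vanish and the root-vector combinatorics collapse to the single linking coefficient.

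\medskip

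Next I would establish $e_q^{\zeta}=\delta e^d$. By Theorem \ref{5-term}, which applies since the retraction $u$ is a $K_{ij}$-bimodule coalgebra map, the non-abelian connecting map satisfies $(\pi\ot\pi)^*\delta f=\del(fu)$, and since the injection $v:B\to R_{ij}$ is a coalgebra map this reduces to $\delta f=fsum_R(v\ot v)$ on $B\ot B$, precisely as in the quantum-plane case. The strategy is to mimic the single-generator $z_{21}$ computation from Theorem \ref{q-plane}: since $\exp:\Der_G(K_{ij},k)\to\Alg_G(K_{ij},k)$ is a group isomorphism for the polynomial Hopf algebra $K_{ij}=k[z_{ji}]$, I would evaluate the convolution $q$-powers $(dsum_R)^t$ on $x^a\ot x^b$ using Lemma \ref{link}, showing that $(dsum_R)^t(x_j^m\ot x_i^n)=\delta^m_n\delta^t_n(n!_q)^2(ds(z_{ji}))^n$ — the same pattern as the $z_{21}$-component in the quantum plane. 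Summing the $q$-exponential series $e_q^{dsum_R}=\sum_{t\ge 0}(dsum_R)^t/t!_q$ then matches $\delta e^d$ term by term, the factor $(n!_q)^2$ against $1/n!_q$ leaving exactly the $n!_q$ coefficient appearing in $\delta e^d$.

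\medskip

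The main obstacle I anticipate is the bookkeeping in the convolution-power computation: verifying that $(dsum_R)^t(x_j^m\ot x_i^n)$ is supported only on the diagonal $m=n=t$ with the correct $q$-binomial coefficients, and that no contributions from the ideal $J_{ij}$ or from the other simple root vectors $x_l$ ($l\ne i,j$) leak in. This is where Lemma \ref{link} does the real work, but one must check carefully that the retraction $u$ kills precisely the unwanted terms — that is, that $p_{mn}\in J_{ij}$ and the higher commutators map to zero under $u$. Once this localization to the single linking generator is secured, the passage to the $q$-exponential is formally identical to the $z_{21}$-case already verified in Theorem \ref{q-plane}, so the remaining steps are routine.
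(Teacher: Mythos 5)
Your proposal is correct and follows essentially the same route as the paper: the paper's own proof simply observes that Lemma \ref{link} holds verbatim with the pair $(1,2)$ replaced by $(i,j)$, deduces the stated form of the Hochschild cocycle from it, and then invokes the same convolution-power and $q$-exponential computation as in Theorem \ref{q-plane}. Your more detailed account of why the retraction $u$ annihilates $J_{ij}$ and why the convolution powers localize to the diagonal $(x_j^n\ot x_i^n)$ with coefficient $(n!_q)^2(ds(z_{ji}))^n$ is exactly the content the paper leaves implicit in the phrase ``arguments similar to those used in Theorem \ref{q-plane}.''
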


\begin{proof} Replacing the pair $(1,2)$ by $(i,j)$, Lemma \ref{link} holds for any linkable pair $i<j$ in any special diagram of finite Cartan type. It shows that the Hochschild cocycle $\zeta =\delta_{Hoch}$ is of the form specified. Together with arguments, similar to those used in Theorem \ref{q-plane}, it also shows that $\delta e^d=e_q^\zeta$.
\end{proof}

\subsection{Type $A_1\times\ldots\times A_1$} The general quantum linear space $V=kx_1\oplus\ldots\oplus kx_{\theta}$ of dimension $\theta $ has $G$-coaction $\delta (x_i)=g_i\ot x_i$ and $G$-action $gx_i=\chi_i(g)x_i$, where $\chi_i^{N_i}=\ep$ and $\chi_i(g_j)\chi_j(g_i)=1$ for $i\ne j$.  

A vertex $i$ is linkable to at most one other vertex, since the order $N_i$ of $q_{ii}=\chi_i(g_i)$ is supposed to be greater than $2$. The vertex set $\{ 1, 2, \dots ,\theta \}$ can therefore be decomposed into a set $L$ of linkable pairs of the form $i<j$ and a set of non-linkable singletons $L^{\perp}$, and it can be ordered accordingly. A quantum linear space is therefore a collection of quantum planes together with a bunch of quantum lines with pushout squares
$$\begin{CD}
K_{ij} @>\kappa_{ij}>> R_{ij} @. \quad  , \quad @. K_l @>\kappa_l>> R_l \\
@V\ep VV  @V\pi_{ij}VV  @.   @V\ep VV  @V\pi_lVV \\
k @>\iota >> B_{ij}  @. \quad  \ \quad  @.  k @>\iota >> B_i
\end{CD}$$
for $(i,j)\in L$ and $l\in L^{\perp}$, respectively. The braided tensor product of all these squares represents the Nichols algebra of the quantum linear space.  The following considerations about such
braided tensor products together with the results for $\theta\le 2$ will describe the deforming cocycles for all quantum linear spaces.

If  a subset $S$ of $\{ 1,2,\ldots ,\theta\}$ is such that none of its vertices is linkable to any vertex not in $S$ then
the complement $T$ has the same property and $\{ 1,2,\ldots ,\theta\}=S\cup T$. The elements of $K_S$ commute with the elements of $R_T$ and the elements of $K_T$ commute with those of $R_S$. It follows that there is a commutative diagram of coalgebras
$$\begin{CD}
K @>\kappa >> R @>\pi >> B \\
@V\rho_KVV @V\rho_RVV @V\rho_BVV \\
K_S\ot K_T @>\kappa_S\ot\kappa_T>> R_S\ot R_T  @>\pi_S\ot\pi_T>> B_S\ot B_T
\end{CD}$$
with $\rho =(p_S\ot p_T)\Delta$ is an isomorphism with inverse $\rho^{-1}=m(i_S\ot i_T)$. The projections $e_S=i_Sp_s$ and $e_T=i_Tp_T$ on $K$, $R$ and $B$ have the property that
$$e_S*e_T=\rho^{-1}\rho=1 \ ,\  ue_S=e_Su \ ,\  ue_T=e_Tu \ ,\ u=e_Su*e_Tu=ue_S*ue_T$$
and, moreover, since the elments of $K_S$ commute with those of $K_T$, also $e_T*e_S=e_S*e_T=1_K$ on $K$. The latter is of course not true on$R$ and $B$, because $e_T*e_S(x_S^ax_T^b)=\chi^a(g^b)e_S*e_T(x^ax^b$.  With $u_S=p_Sui_S$ and $u_T=p_Tui_T$ the diagram
$$\begin{CD}
R @>p_S>> R_S @>i_S>> R @<<i_T< R_T @<<p_T< R \\
@VuVV          @Vu_SVV     @VuVV          @Vu_TVV    @VuVV \\
K  @>p_S>> K_S @>i_S>> K @<<i_T<  K_T @<<p_T< K
\end{CD}$$
commutes.  The projections $e_S=i_S\pi_S$ and $e_T=i_T\pi_T$ on $R$ and $K$ satisfy
$$e_Su=ue_S \  ,\  e_Tu=ue_T \  ,\  e_S*e_T=\rho_S^{-1}\rho_S=1 \  , \  u=ue_S*ue_T=e_Su*e_Tu$$
and the diagram
$$\begin{CD}
R @>u>> K \\
@V\rho_R VV  @V\rho_K VV \\
R_S\ot R_T @>u_S\ot u_T>> K_S\ot K_T
\end{CD}$$
commutes.
Moreover, since the elements of $K_S$ and $K_T$ commute, we have $e_T*e_S=e_S*e_T= 1_K$ on $K$.
This is of course not the case on $R$ or on $B$, because $e_T*e_S(x^ax^{a'}z^bz^{b'})=\chi^a(g^{a'})e_S*e_T(x^ax^{a'}z^bz^{b'})$.

\begin{Proposition} \label{tp} Suppose that $\{ 1,2,\ldots ,\theta\} =S\cup T$ is such that none of the vertices of $S$ is linkable to any vertex of $T$, then $u=u_S*u_T:R\to K$, where $u_S=ue_S=e_Su$ and $u_T=ue_T=e_Tu$
and the square
$$\begin{CD}
\Alg_G(K_S,k)\times\Alg_G(K_T,k) @>\delta >> \H_G^2(B_S,k)\times\H_G^2(B_T,k) \\
@V\rho^1VV  @V\rho^2VV \\
\Alg_G(K,k)  @>\delta >>  \H_G^2(B,k)
\end{CD}$$
commutes, where $\rho^1(f,f')=(f\ot f')\rho$ and $\rho^2(\sigma ,\sigma ')=(\sigma\ot\sigma ')(1\ot c\ot 1)(\rho\ot\rho )$. Moreover, $\rho^1$ is an isomorphism, while $\rho^2$ is injective.
\end{Proposition}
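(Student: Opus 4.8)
The plan is to reduce the commutativity of the square to a single \lq braided multiplicativity' identity for $\del$ on $R\ot R$, and to prove injectivity of $\rho^2$ by exhibiting explicit left inverses. The decomposition $u=u_S*u_T$ with $u_S=ue_S=e_Su$ and $u_T=ue_T=e_Tu$ is immediate: it is exactly the relation $u=ue_S*ue_T$ recorded just before the statement, together with $ue_S=e_Su$ and $ue_T=e_Tu$. For $\rho^1$ I first note that $\rho^1(f,f')=(f\ot f')\rho_K=fp_S*f'p_T$ is a $G$-invariant algebra map, since $p_S,p_T$ are algebra maps and the convolution product of two algebra maps into the commutative target $k$ is again an algebra map. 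An explicit inverse is $g\mapsto(gi_S,gi_T)$: the identity $e_S*e_T=1_K$ gives $g=ge_S*ge_T=(gi_S)p_S*(gi_T)p_T=\rho^1(gi_S,gi_T)$ for every algebra map $g$, and since the elements of $K_S$ convolution-commute with those of $K_T$ on $K$ (that is $e_S*e_T=e_T*e_S=1_K$) the map $\rho^1$ is multiplicative; hence it is a group isomorphism.

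For the commutativity of the square I would work entirely on $R\ot R$, using that $(\pi\ot\pi)^*:\Reg_G(B^2,k)\to\Reg_G(R^2,k)$ is injective (shown in the proof of Theorem \ref{5-term}). Writing $g=\rho^1(f,f')$ and using the commuting square $\rho_Ku=(u_S\ot u_T)\rho_R$ one gets $gu=(\alpha\ot\beta)\rho_R$ with $\alpha=fu_S\in\Reg_G(R_S,k)$ and $\beta=f'u_T\in\Reg_G(R_T,k)$; here $u_S:R_S\to K_S$ and $u_T:R_T\to K_T$ are the single-factor retractions. On the other hand, expanding $\rho^2(\delta_Sf,\delta_Tf')$ through $\pi\ot\pi$ by means of $\rho_B\pi=(\pi_S\ot\pi_T)\rho_R$, naturality of the braiding, and the single-factor relations $(\pi_S\ot\pi_S)^*\delta_Sf=\del\alpha$, $(\pi_T\ot\pi_T)^*\delta_Tf'=\del\beta$, reduces the claim to the cochain identity
$$\del\bigl((\alpha\ot\beta)\rho_R\bigr)=(\del\alpha\ot\del\beta)(1\ot c\ot 1)(\rho_R\ot\rho_R)\qquad(\star)$$
on $R\ot R$. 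Once $(\star)$ is known, injectivity of $(\pi\ot\pi)^*$ forces $\delta g=\rho^2(\delta_Sf,\delta_Tf')$ already at the level of the representing cocycles.

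The identity $(\star)$ is the main obstacle. I would prove it by expanding both sides via $\del h=\del^0h*\del^2h*\del^1h^{-1}$ and the braided comultiplication $\Delta_{R\ot R}=(1\ot c\ot 1)(\Delta_R\ot\Delta_R)$, using that $\rho_R$ is simultaneously an algebra and a coalgebra map, so that $p_S,p_T$ are bialgebra maps and $\del^i(\xi p_S)=(\del^i\xi)(p_S\ot p_S)$. The two genuinely delicate points are, first, that the convolution inverse of $h=(\alpha\ot\beta)\rho_R=\alpha p_S*\beta p_T$ is the reversed product $h^{-1}=\beta^{-1}p_T*\alpha^{-1}p_S$, so the antipode term contributes an $S$-factor and a $T$-factor in the opposite order; and second, that on $R$ the $S$- and $T$-supported cochains do \emph{not} convolution-commute (unlike on $K$), the discrepancy being precisely the braiding scalars $\chi^a(g^{a'})$. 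Tracking this braiding honestly is exactly what produces the middle twist $1\ot c\ot 1$ on the right of $(\star)$; the only interchange I may use freely is the general relation $\del^0\eta*\del^2\eta'=\del^2\eta'*\del^0\eta$ from Lemma \ref{nonab}, whereas every interchange of a $\del^1$ term with a $\del^0$ or $\del^2$ term of the other type must carry the braiding. The same bookkeeping shows that coboundaries are carried to coboundaries, which gives well-definedness of $\rho^2$ on cohomology classes.

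Finally, for injectivity of $\rho^2$ I would construct left inverses from the Hopf algebra inclusions $\iota_S=\rho_B^{-1}(\,\cdot\ot 1):B_S\to B$ and $\iota_T=\rho_B^{-1}(1\ot\,\cdot\,):B_T\to B$, which induce pullbacks $\iota_S^*,\iota_T^*$ on $\H^2_G$. A short computation using $\rho_B\iota_S=(\,\cdot\ot 1)$, the triviality of the braiding against the unit, and the normalization $\sigma_T(\iota\ot 1)=\ep=\sigma_T(1\ot\iota)$ gives $\iota_S^*\rho^2(\sigma_S,\sigma_T)=\sigma_S$ and likewise $\iota_T^*\rho^2(\sigma_S,\sigma_T)=\sigma_T$, already at the level of cochains. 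Hence $(\iota_S^*,\iota_T^*)$ is a left inverse of $\rho^2$ on classes, so $\rho^2$ is injective.
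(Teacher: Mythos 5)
Your treatment of the easy parts coincides with the paper's: the decomposition $u=u_S*u_T$ is read off from $u=ue_S*ue_T$; the inverse of $\rho^1$ is $g\mapsto (gi_S,gi_T)$, verified via $e_S*e_T=1_K$ and $\rho i_S=(1\ot\iota\ep)\Delta$, $\rho i_T=(\iota\ep\ot 1)\Delta$; and your left inverse $(\iota_S^*,\iota_T^*)$ of $\rho^2$ is exactly the paper's $\psi(\sigma)=(\sigma(i_S\ot i_S),\sigma(i_T\ot i_T))$. Your reduction of the commutativity of the square to the identity $(\star)$ is also correct and, after unwinding $\rho_{R\ot R}=(1\ot c\ot 1)(\rho\ot\rho)=(p_S\ot p_S\ot p_T\ot p_T)\Delta_{R\ot R}$, your $(\star)$ is precisely the paper's claim $\del fu=\del fe_Su*\del fe_Tu$.

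The gap is that $(\star)$ --- which you yourself call ``the main obstacle'' --- is never proved: you describe a plan of expanding all six coface factors and ``tracking the braiding honestly,'' but you do not carry it out, and the plan as stated misses the two observations that make the identity tractable. First, your premise that the $S$- and $T$-supported cochains do \emph{not} convolution-commute on $R$ is false for the cochains actually in play: $fe_Su$ and $fe_Tu$ both factor through the \emph{coalgebra} retraction $u:R\to K$, so their convolution on $R$ equals $f\circ((e_S*e_T)u)$ computed inside $K$, where $e_S*e_T=e_T*e_S=1_K$; hence $fe_Su*fe_Tu=fu=fe_Tu*fe_Su$ and likewise for the inverses. (The non-commutativity $e_T*e_S\ne e_S*e_T$ with the scalar $\chi^a(g^{a'})$ lives on $R$ and $B$, not on the image of $u$.) Second, the representing cocycle is evaluated through the coalgebra section $v:B\to R$, where $fuv=\ep$ kills the $\del^0$ and $\del^2$ factors, so $\del fu(v\ot v)=\del^1 fsu(v\ot v)$; since $\del^1$ is an algebra map for convolution and $fsu=fse_Su*fse_Tu$ with commuting factors, the factorization of the cocycle follows with no braiding bookkeeping at all, and the twist $1\ot c\ot 1$ enters only through the definition of convolution on $R\ot R$, i.e.\ through $\rho_{R\ot R}$, not through interchanging $\del^1$-terms with $\del^0$- or $\del^2$-terms. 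Without these two points your proposed ``honest tracking'' would have to resolve interchanges such as $\del^1 h_S^{-1}$ past $\del^0 h_T*\del^2 h_T$, which fail for general regular cochains; so as written the central step is an assertion, not a proof.
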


\begin{proof} With our assumptions and $f\in\Alg_G(K,k)$ we have $f=f(e_S*e_T)=fe_S*fe_T$ and$fu=fe_Su*fe_Tu$. Observe that the inverse of $\rho^1$ is given by $(\rho^1)^{-1}(f)=(fi_S,fi_T)$:
$$\rho^1(\rho^1)^{-1}(f)=(fi_S\ot fi_T)\rho =f(e_S*e_T)=f,$$
while
$$(\rho^1)^{-1}\rho^1(f_S,f_T)=((f_S\ot f_T)\rho i_S, (f_S\ot f_T)\rho i_T)=(f_S\ot \ep )\Delta , \ep\ot f_T )\Delta )=(f_S, f_T),$$
since $\rho i_S=(1\ot\iota\ep )\Delta$ and $\rho i_T=(\iota\ep\ot 1)\Delta$. A similar argument shows that 
$\rho^2$ has a left inverse $\psi$ given by $\psi (\sigma )=(\sigma (i_S\ot i_S), \sigma (i_T\ot i_T))$.
 
The diagram commutes, because
\begin{eqnarray*}
\del^i(\rho^1(f_S,f_T)u) & = & \del^i((f_S\ot f_T)\rho u)=\del^i(f_Su_Sp_S*f_Tu_Tp_T) \\
& = & \del^i(f_Su_Sp_S)*\del^i(f_Tu_Tp_T)=(\del^if_Su_S\ot\del^if_Tu_T)\rho_{R\ot R} \\
& = & \theta^2(\del^if_Su_S,\del^if_Tu_T),
\end{eqnarray*}
where we used $d^i(p_S\ot p_S)=p_Sd^i$, $d^i(p_T\ot p_T)=p_Td^i$ and $\rho_{R\ot R}=(1\ot c\ot 1)(\rho\ot\rho )=(1\ot c\ot 1)(p_S\ot p_T\ot p_S\ot p_T)(\Delta_R\ot\Delta_R)=(p_S\ot p_S\ot p_T\ot p_T)\Delta_{R\ot R}$. Moreover,
\begin{eqnarray*}
\del^ifu & = & \del^i(fe_S*fe_T)u=\del^i(fe_Su*fe_Tu)=\del^ife_Su*\del^ife_Tu \\
 & = & \del^ifi_Su_Sp_S*\del^ifi_Tu_Tp_T=(\del^ifi_Su_S\ot\del^ifi_Tu_T)\rho_{R\ot R}
 \end{eqnarray*}
 as well.

Since $e_T*e_S=e_S*e_T=1_K$ on $K$ and hence $\del^ifu=\del^ife_Su*\del^ife_Tu=\del^ife_Tu*\del^ife_Su$, and since $\del fu (v\ot v)=\del^1fsu(v\ot v)$,  it follows that
$$\del fu=\del fe_Su*\del fe_Tu$$
as required.
\end{proof}

A comparison with Hochschild cohomology can be obtained inductively via a generalized \lq exponential' map, making use of the isomorphism
$$\begin{CD}
\Der_G(K,k)  @>\delta_{hoch}>> H^2_G(B,k) \\
@V\cong VV  @V\cong VV \\
\Der_G(K_S,k)\oplus\Der_G(K_T,k) @>\delta_{hoch}\oplus\delta_{hoch}>> H^2_G(B,k)
\end{CD}$$
and Proposition \ref{tp} to get a commutative square
$$\begin{CD}
\Der_G(K,k)  @>\delta_{hoch}>> H^2_G(B,k) \\
@V\exp VV  @V\Exp VV \\
\Alg_G(K,k) @>\delta >> \H^2_G(B,k)
\end{CD}$$
which says that
$$\delta e^{d_S+d_T}=\delta e^{d_S}*\delta e^{d_T}=\Exp_S(\delta_{hoch}d_S)*\Exp_T(\delta_{hoch}d_T)=\Exp (\delta_{hoch}(d_S+d_T))$$
by extending the notation naturally. In particular, if $f\in\Alg_G(K,k)$ and $\sigma =\delta f$ then
$$\sigma (x_i^m\ot x_j^n)=fsu (x_i^mx_j^n) =\left\{ \begin{array}{ll}
fsu(z_i)  & \mbox{, if $i=j$ and $m+n=N_i$} \\
n!_{q_i} fs(z_{ji})^n  &  \mbox{, if $i>j$ linkable and $m=n$} \\
0 & \mbox{, otherwise.}
\end{array}
\right. $$

\noindent
{\bf Remark.} Observe that Proposition \ref{tp} holds for any special diagram of finite Cartan type, provided that a  $K$-bimodule coalgebra retraction $u: R\to K$ exists. This is because $\del^i(fe_Su)*\del^j(fe_Tu)=\del^j(fe_Tu)*\del^i(fe_Su)$ for $f\in\Alg_G(K,k)$ and $i\le j$ if $S$ and $T$ are not linkable.

\subsection{The connected case}  Let $\mathcal D$ be a special connected datum of finite Cartan type with Cartan matrix $(a_{ij})$.  The vector space $V=V(\mathcal D)$ can also be viewed as a crossed module in $^{\mathbf Z[I]}_{\mathbf Z[I]}YD$, where $\mathbf Z[I]$ is the free abelian group on the set of simple roots $I=\{ \alpha_1, \ldots , \alpha_{\theta}\}$. The $\mathbf Z[I]$-degree of a word $x=x_{i_1}x_{i_2}\ldots x_{i_n}$ in the tensor algebra $mathcal a(V)$ is defined by $\deg (x)=\sum_{i=1}^{\theta}n_i\alpha_i$, where $n_i$ is the number of ocurrencies of $x_i$ in $x$. The Weyl group $W\subset \Aut (\mathbf Z[I])$ is generated by the automorphisms $s_i$ defined by $s_i(\alpha_j)=\alpha_j-a_{ij}\alpha_i$. The root system $\Phi =\cup_{i=1}^{\theta}W(\alpha_i )$ is the union of the orbits of simple roots in $\mathbf [I]$, and 
$$\Phi^+=\{ \alpha=\sum_{i=1}^{\theta}n_i\alpha_i\in\Phi | n_i\ge 0\}$$
is the set of positive roots. The Hopf algebra $\mathcal A(V)$, the quotient Hopf algebra $R(\mathcal D)=\mathcal
A(V)/(ad^{1-a_{ij}}x_i(x_j)|1\le i\ne j\le\theta )$ and its Hopf
subalgebra $K(\mathcal D)$ generated by $\setst{
x_{\alpha}^N}{\alpha\in\Phi^+}$, as well as the Nichols algebra $B(V)=R(V)/(x_{\alpha}^N)$, are all Hopf algebras in $^{\mathbf Z[I]}_{\mathbf Z[I]}YD$. In particular, their comultiplications are $\mathbf Z[I]$-graded. By construction, for
$\alpha\in\Phi^+$, the root vector $x_{\alpha}$ is $\mathbf Z[I]$-homogeneous of $\mathbf Z[I]$-degree $\alpha$,
so that $\delta (x_{\alpha})=g_{\alpha}\ot x_{\alpha}$ and $gx_{\alpha}=\chi_{\alpha}(g)x_{\alpha}$. For 
$1\le l\le p$ and for $a =(a_1, a_2,\ldots ,
a_p)\in\mathbf N^p$ write $\underline a=\sum_{i=1}^pa_i\beta_i$ and
$$g^a=g_1^{a_1}g_2^{a_2}\ldots g_p^{a_p}\in G\ ,\
\chi^a=\chi_1^{a_1}\chi_2^{a_2}\ldots\chi_p^{a_p}\in\tilde G\ , \
x^a=x_{\beta_1}^{a_1}x_{\beta_2}^{a_2}\ldots x_{\beta_p}^{a_p}\in R(\mathcal D).$$ In particular, for $e_l=(\delta_{kl})_{1\le k\le p}$, where
$\delta_{kl}$ is the Kronecker symbol, $\underline e_l=\beta_l$ and
$x^{e_l}=x_{\beta_l}$ and $x^{Ne_l}=x_{\beta_l}^N=z_l$ for $1\le l\le p$. In this notation
$$\{ x^a|0\le a_i\}\ ,\ \{ z^b|0\le b_i\}\ ,\ \{x^a|0\le a_i<N\}$$
form a PBW-basis for $R(V)$, $K(V)$ and $B(V)$, respectively. The height of $\alpha =\sum_{i=1}^{\theta}n_i\alpha_i\in\mathbf Z[I]$ is defined to be
the integer $ht(\alpha )=\sum_{i=1}^{\theta}n_i$. Observe that if
$a, b, c\in\mathbf N^p$ and $\underline a=\underline b+\underline c$ then
$$g^a=g^bg^c\ ,\ \chi^a = \chi^b\chi^c\ \rm{and}\ ht(\underline
b)<ht(\underline a)\ \rm{if}\ \underline c\ne 0.$$ By  \cite{AS} (Theorem 2.6), the sets
$$\{ z^b | 0\le b_i\}\ ,\  \{ x^az^b|0\le a_i<N, 0\le b_j\}\ ,\ \{x^a | 0\le a_i < N\}$$
form a basis for $K(V)$, $R(V)$ and $B(V)$, respectively.  The squares
$$\begin{CD}
K(V) @>\kappa >> R(V) @.  \quad , \quad @. K\# kG @>\kappa\# 1>>  R\# kG \\
@V\ep VV  @V\pi VV      @.                    @V\ep\# 1VV                             @V\pi\# 1 VV \\
k @>\iota >> B(V)             @.  \quad\ \quad @. kG @>\iota\# 1 >> B\# kG
\end{CD}$$
are pushout squares of braided Hopf algebras and their bosonizations, respectively. 
 
Moreover, the  $K$-module isomorphism $\vartheta :R\to B\ot K$ given by $\vartheta (x^az^b)=x^a\ot z^b$, can be used to get a $K$-module retraction $u=(\ep\ot 1)\vartheta :R(V)\to K(V)$, $u(x^az^b) = \ep (x^a)z^b)$, for the inclusion of $\kappa :K(V)\to R(V)$. Thus, the conditions for the $5$-term sequence in Hochschild cohomology are satisfied. The connecting map
$$\delta_{hoch} :\Der_G(K,k)\to H^2_G(B,k)$$
which is injective since $\Der_G(R,k)=0$, is such that $\delta_{hoch} d(\pi\ot\pi)=\del_{hoch}(du)=-dum_R$, where $\pi :R(V)\to B(V)$ is the canonical projection. The $K$-module map $u:R\to K$ just defined is not a coalgebra map in general.

Observe that $K=k[z_{\alpha} |\alpha\in\Phi^+]$ is a polynomial algebra, since by our assumption $\chi_i^N=\ep $ for $1\le i\le \theta $. The algebra isomorphism $\rho :\oplus_{\alpha\in\Phi^+}K_{\alpha}\to K$, given by $\rho (z_{\alpha_1}^{n_1}\ot z_{\alpha_2}^{n_2}\ot\ldots\ot z_{\alpha_p}^{n_p})=z_{\alpha_1}^{n_1}z_{\alpha_2}^{n_2}\ldots z_{\alpha_p}^{n_p}$, induces a commutative diagram
$$\begin{CD}
\Der_G(K,k) @>\rho_{\Der} >> \oplus_{\alpha\in\Phi^+}\Der_G(K_{\alpha},k) \\
@V\Exp VV  @V\exp VV \\
\Alg_G(K,k)  @>\rho_{\Alg} >> \times_{\alpha\in\Phi^+}\Alg_G(K_{\alpha},k)
\end{CD}$$
of sets, with $\rho_{\Der}(d)=(di_{\alpha})$, $\rho_{\Alg}(f)=(fi_{\alpha})$, $\exp ((d_{\alpha}))= (e^{d_{\alpha}})$ and $\Exp (d)=\rho_{\Alg}^{-1}\exp\rho_{\Der}$, where $i_{\alpha}:K_{\alpha}\to K$ and $p_{\alpha};K\to K_{\alpha}$ are the obvious canonical injections and projections. This means more explicitly that
$$\Exp (d)(z_{\alpha_1}^{n_1}z_{\alpha_2}^{n_2}\ldots z_{\alpha_p}^{n_p})=e^{di_1}(z_{\alpha_1}^{n_1})e^{di_2}(z_{\alpha_2}^{n_2})\ldots e^{di_p}(z_{\alpha_p}^{n_p})$$
for $d\in\Der_G(K,k)$.

If $\kappa :K\to R$ has a $K$-module coalgebra retraction $u_{\infty}:R\to K$ then Theorem \ref{5-term} is applicable, and the diagram
$$\begin{CD}
\Der_G(K,k) @>\delta_{hoch}>>  H_G^2(B,k) \\
@V\Exp VV  @. \\
\Alg_G(K,k)  @>\delta >>  \H_G^2(B,k)
\end{CD}$$
connects the relevant part of the Hochschild cohomology $H^2_G(B,k)$ to the multiplicative cohomology $\H^2_G(B,k)$.

\begin{Proposition} \label{injective} Let $V$ be a special (connected) diagram of finite Cartan type. If  $K(V)$ is a $K$-module coalgebra retract in $R(V)$ then the connecting map
$$\delta :\Alg_G(K,k)\to \H_G^2(B,k)$$
is injective. 
\end{Proposition}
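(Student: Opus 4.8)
The plan is to invoke Theorem~\ref{5-term}, which applies verbatim since by hypothesis $u:R\to K$ is a $K$-bimodule coalgebra retraction, and then to upgrade exactness at $\Alg_G(K,k)$ to genuine injectivity. Exactness only yields that the \emph{fibre over the base point} of $\H_G^2(B,k)$ is $\im\kappa^{*}$, so my first task is to record that this image is trivial: since $R$ is generated as an algebra by the simple root vectors $x_{\al}$ and $G$-invariance forces $f(x_{\al})=\chi_{\al}(g_{\al})f(x_{\al})=q_{\al}f(x_{\al})$ with $q_{\al}$ a non-trivial root of unity, every $f\in\Alg_G(R,k)$ annihilates all the generators, so $\Alg_G(R,k)=\{\ep\}$ and hence $\im\kappa^{*}=\{\ep\}$. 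The point of the proof is that the \emph{whole} map $\delta$ is one-to-one, not merely that its fibre over the base point is a singleton.

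So suppose $\delta f=\delta f'$ for $f,f'\in\Alg_G(K,k)$. By construction of the connecting map there is a $\chi\in\Reg_G(B,k)$ with $\sigma_{f'}=\del^0\chi*\del^2\chi*\sigma_f*\del^1\chi^{-1}$, where $(\pi\ot\pi)^{*}\sigma_f=\del(fu)$ and $(\pi\ot\pi)^{*}\sigma_{f'}=\del(f'u)$. I would apply $(\pi\ot\pi)^{*}$, use that $\pi$ is a bialgebra map so that $(\pi\ot\pi)^{*}\del^i\chi=\del^i(\chi\pi)$ and $(\chi\pi)^{-1}=\chi^{-1}\pi$, and then collapse the right-hand side using the coface homomorphism property together with the braiding identity $\del^0a*\del^2b=\del^2b*\del^0a$ (exactly as in the proof of Theorem~\ref{5-term}). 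This rewrites the relation as
$$\del(f'u)=\del(\chi\pi*fu).$$

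Next I would set $h=\chi\pi*fu$ and $g=h^{-1}*f'u=fsu*\chi^{-1}\pi*f'u\in\Reg_G(R,k)$, using that $u$ is a coalgebra map so that $fu$ is convolution invertible with inverse $fsu$. Expanding $\del g=\del^0(h^{-1}*f'u)*\del^2(h^{-1}*f'u)*\del^1\bigl((h^{-1}*f'u)^{-1}\bigr)$, the same two ingredients (the $\del^i$ are homomorphisms, and $\del^0$ commutes with $\del^2$) let me regroup the six factors so that, after substituting the displayed identity $\del(f'u)=\del h$, everything telescopes to $\del^0(h^{-1}*h)*\del^2(h^{-1}*h)*\del^1(h^{-1}*h)=\ep\ot\ep$. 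Thus $g$ is an algebra map, and since all its factors are $G$-invariant we get $g\in\Alg_G(R,k)=\{\ep\}$, whence $f'u=\chi\pi*fu$.

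Finally I would restrict this identity along $\kappa$. Because $\kappa$ is a coalgebra map, $u\kappa=1_K$ and $\pi\kappa=\iota\ep$, the left side becomes $f'$ and the right side becomes $(\chi\pi\kappa)*f=\bigl(\chi(1_B)\ep\bigr)*f=\chi(1_B)f$; evaluating at $1_K$ and using that $f,f'$ are unital forces $\chi(1_B)=1$, so $f=f'$ and $\delta$ is injective. The only delicate point is the non-commutativity of convolution in the two middle steps: the argument closes up precisely because the retraction is a coalgebra map (giving $(fu)^{-1}=fsu$) and because $\del^0$ commutes with $\del^2$, which together allow every stray factor to cancel. I expect this bookkeeping, rather than any conceptual step, to be the main obstacle.
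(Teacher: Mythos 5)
Your proposal is correct and follows essentially the same route as the paper's own proof: establish $\Alg_G(R,k)=\{\ep\}$ from the action on the simple root vectors, pull the cocycle equivalence back along $(\pi\ot\pi)^*$, use the homomorphism property of the cofaces together with the elementwise commutativity of $\im\del^0$ and $\im\del^2$ and the invertibility of $fu$ (with inverse $fsu$) to show that $g=fsu*\chi^{-1}\pi*f'u$ lies in $\Alg_G(R,k)$, and then restrict $f'u=\chi\pi*fu$ along $\kappa$. Your handling of the final normalization $\chi(1_B)=1$ is a touch more explicit than the paper's, but the argument is the same.
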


\begin{proof} The simple root vectors $x_{\alpha}$, where $\alpha\in\Phi^+$ is a simple root, generate $R$ as an algebra. Moreover, $f(x_{\alpha})=f(gx_{\alpha})=\chi_{\alpha}(g)f(x_{\alpha})$ for every $g\in G$ and every $f\in\Alg_G(R,k)$. It follows that $\Alg_G(R,k)=\{\ep\}$, since $q_{\alpha}=\chi_{\alpha}(g_{\alpha})$ is a non-trivial root of unity for every simple root $\alpha\in\Phi^+$.

Now suppose that $\delta f=\delta f'$ in $\H_G^2(B,k)$ for some $f,f'\in\Alg_G(K,k)$. The representing cocycles $\sigma$ and $\sigma '$ are equivalent, so that $\sigma '=\del^0\chi *\del^2\chi *\sigma *\del^1\chi^{-1}$ for some $\chi\in\Reg_G(B,k)$. It follows that
\begin{eqnarray*}
\del^0f'u*\del^2f'u*\del^2f'su & = & \del f'u=(\pi\ot\pi )^*\sigma '=(\pi\ot\pi )^*(\del^0\chi *\del^2\chi *\sigma *\del^1\chi^{-1}) \\
& = & \del^0(\pi^*\chi )*\del^2(\pi^*\chi )*\del fu*\del^1(\pi^*\chi^{-1}) \\
& = & \del^0(\pi^*\chi )*\del^2(\pi^*\chi )*\del^0fu*\del^2fu*\del^1fsu*\del^1(\pi^*\chi^{-1}) \\
& = & \del^0(\pi^*\chi )*\del^0fu*\del^2(\pi^*\chi )*\del^2fu*\del^1fsu*\del^1(\pi^*\chi^{-1}) \\
& = & \del^0(\pi^*\chi *fu)*\del^2(\pi^*\chi *fu)*\del^1(fsu*\pi^*\chi^{-1})
\end{eqnarray*}
since the $\im\del^0$ and $\im\del^2$ commute elementwise. so that $\del^2(\pi^*\chi )*\del^0fu=(\pi^*\chi\ot\ep\ot\ep\ot fu)\Delta_{R\ot R}=(\ep\ot fu\ot\pi^*\chi\ot\ep )\Delta_{R\ot R}=\del^0fu*\del^2(\pi^*\chi )$. This means, again using the elementwise commutativity of $\im\del^0$ and $\im\del^2$, that
\begin{eqnarray*}
\del^1(fsu*\pi^*\chi^{-1}*f'u) & = & \del^1(fsu*\pi^*\chi^{-1})*\del^1f'u  \\
& = & \del^2(\pi^*\chi *fu)^{-1}*\del^0(\pi^8\chi *fu)^{-1}\del^of'u*\del^2f'u \\
& = & \del^0(fsu*\pi*\chi^{-1})\del^0f'u\del^2(fsu*\pi^*\chi^{-1})\del^2f'u \\
& = & \del^0(fsu*\pi^*\chi^{-1}*f'u)*\del^2(fsu*\pi^*\chi^{-1}*f'u)
\end{eqnarray*}
so that $fsu*\pi^*\chi^{-1}*f'u\in\Alg_G(R,k)=\{\ep\}$ and then $f'u=\pi^*\chi*fu$. But then
$$f'=f'u\kappa =(\pi^*\chi*fu)\kappa =fu\kappa*\chi\pi\kappa =\chi\ep*f=\ep *f=f$$
as required.
\end{proof}

The multiplicative cocycle $\sigma$ representing the cohomology class $\delta f$ is given by
$$(\pi\ot\pi )^*\sigma =\del fu_{\infty} =\del^0fu_{\infty}*\del^2fu_{\infty}*\del^1fsu_{\infty}=(fu_{\infty}\ot fu_{\infty})*fsu_{\infty}m_R$$
or, equivalently $\sigma =\del fu_{\infty}(v\ot v)$, where $v:B\to R$ is the obvious linear section of the canonical projection $\pi :R\to B$.

\begin{Conjecture} For every special connected diagram  of finite Cartan type $V$ the braided Hopf subalgebra $K$ is a $K$-module coalgebra retract in $R$.
\end{Conjecture}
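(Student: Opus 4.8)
The plan is to build the retraction $u_\infty:R\to K$ by correcting the naive one degree by degree, exploiting the $\mathbf Z[I]$-grading throughout. Since $gx_\alpha=\chi_\alpha(g)x_\alpha$ with $\chi_\alpha$ determined by the degree $\alpha$, the group $G$ acts on each homogeneous component $R_\gamma$ through the single character $\chi_\gamma$; hence any $\mathbf Z[I]$-graded map $R\to K$ of degree $0$ is automatically $G$-equivariant, and it suffices to produce a graded $K$-bimodule coalgebra retraction. I would start from the graded $K$-bimodule retraction $u_0(x^az^b)=\ep(x^a)z^b$ already at hand, whose only possible failure is to be a coalgebra map, measured by the graded defect $\Theta=(u_0\ot u_0)\Delta_R-\Delta_Ku_0:R\to K\ot K$.

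The first step is to localize $\Theta$. Because $K$ is a Hopf subalgebra, $\Theta$ vanishes on $K$, and a divisibility argument confines it further: every element of $K$ has $\mathbf Z[I]$-degree in $N\,\mathbf Z_{\ge0}[I]$, so a summand of $\Delta_R(x^az^b)$ can lie in $K\ot K$ only when $\underline a+\underline b\in N\,\mathbf Z_{\ge0}[I]$. Thus $\Theta$ is supported on the components $R_\gamma$ with $\gamma\in N\,\mathbf Z_{\ge0}[I]$, and on each such component it is concentrated on the directions coming from the non-simple root vectors. This recovers the known cases at a glance: for quantum lines and quantum linear spaces no non-simple roots interfere, $\Theta\equiv0$, and $u_0$ is already the retraction sought.

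Next I would run a height induction to kill $\Theta$. Writing $u=u_0+\phi$ with $\phi$ a graded $K$-bimodule map supported in height $h$, the reduced-coproduct computation collapses: since $\Delta_R(x_\alpha)=x_\alpha\ot 1+1\ot x_\alpha+(\text{terms of strictly lower height in each factor})$ in the connected braided $R$, the cross terms $(u_0\ot\phi+\phi\ot u_0)\Delta_R$ vanish on height-$h$ elements, and the new defect in height $h$ is exactly $\Theta+\Delta'_K\phi$, where $\Delta'_K$ is the reduced comultiplication of $K$. The inductive step therefore amounts, in each degree $\gamma\in N\,\mathbf Z_{\ge0}[I]$, to solving $\Delta'_K\phi=-\Theta$ on $R_\gamma$, that is, to exhibiting the defect as the reduced coproduct of a single element of $K_\gamma$. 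Coassociativity of $\Delta_R$ forces $\Theta$ to satisfy the coassociative $2$-cocycle condition for the coalgebra $K$, so the step is solvable precisely when this cocycle is a coboundary; the correction can be kept $K$-bilinear because $K$ is central in $R$, which makes $\Theta$ itself $\Delta(K)$-linear on both sides.

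The hard part will be establishing that this coalgebra $2$-cocycle is always a coboundary. For $A_2$ this is exactly the explicit verification underlying the last section: the $q$-commutator identities of the type established in Lemma \ref{link} give closed formulas for $\Delta_R$ on the PBW monomials, and one checks by hand that $\Theta$ is a reduced coproduct. In general I expect the class of $\Theta$ in degree $\gamma$ to be generated, through the Levendorskii--Soibelman triangular form of the root-vector comultiplication, by the quantum Serre interactions of pairs of simple roots, so a convex-order induction along $\Phi^+$ should reduce the vanishing to the rank-two types $A_2$, $B_2$ and $G_2$, which are finite explicit computations. The genuine obstacle, and the reason the conjecture is not already a theorem, is that a connected diagram cannot be split as in Proposition \ref{tp}: the braided-tensor bookkeeping that there decouples non-linkable blocks must here be carried out uniformly across a single convex order, and it is the coassociativity constraints coupling the higher root vectors $x_\alpha$ to all of $K$ simultaneously that resist a purely formal argument.
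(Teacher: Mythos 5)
This statement is stated in the paper as a \emph{Conjecture}: the paper offers no proof of it, only a recursive correction procedure (build $u_{i+1}^{j+1}$ from $u_{i+1}^{j}$ by finding, for each new PBW monomial $x$, an element $z\in K$ with $\Delta_K z - z\ot 1 - 1\ot z = (u_{i+1}^j\ot u_{i+1}^j)\Delta_R v_{i+1}^{j+1}x$) together with an explicit verification for type $A_2$. Your proposal is essentially the same program in different clothing: your equation $\Delta_K'\phi = -\Theta$ in each degree is exactly the paper's requirement that the defect of the partially corrected retraction be a reduced coproduct of some $z\in K$. The difficulty of the conjecture is entirely concentrated in the solvability of that equation --- why should the coalgebra $2$-cocycle $\Theta$ always be a coboundary, and why should the solution be choosable compatibly with the $K$-bimodule structure? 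You acknowledge this yourself (``the hard part will be establishing that this coalgebra $2$-cocycle is always a coboundary''), you offer only an expectation that a Levendorskii--Soibelman/convex-order induction reduces it to rank two, and you do not carry out even the rank-two computations. So this is not a proof; it is a reduction of the conjecture to the same open step on which the paper's own recursive procedure already hinges.

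Two subsidiary points deserve correction even at the level of the program. First, the cross terms $(u_0\ot\phi+\phi\ot u_0)\Delta_R$ do not vanish on a height-$h$ element $x$: since $\Delta_R x = x\ot 1 + 1\ot x + (\text{lower}\ot\text{lower})$, they contribute $\phi(x)\ot 1 + 1\ot\phi(x)$, and it is only after combining with $-\Delta_K\phi$ that you get the reduced coproduct $-\Delta_K'\phi$; your displayed equation is right, but the stated justification is not. Second, the assertion that ``$K$ is central in $R$'' is unjustified and is what you lean on to keep the correction $K$-bilinear. The paper never claims centrality; it only obtains the $K$-bimodule structure of $u$ from the PBW decomposition $\vartheta: R\cong B\ot K$, and in the closely parallel quantum-plane construction the commutators $[x_i,z_{21}]$ are explicitly \emph{nonzero} in $R$ (they generate the complement $J$). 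For a connected diagram the commutation of the $z_\alpha=x_\alpha^N$ with the root vectors is a nontrivial fact about the higher root vectors that you would need to prove or replace; as it stands the $K$-bilinearity of your correction $\phi$ is not secured.
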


Here is a recursive procedure to verify the conjecture. Let $B_i$ be the linear span in $B$ of all ordered words involving root vectors of height $\le i$ only. For $i>1$ let $B_i^j\subset B_i$ be the linear span of all ordered monomials in $B_i$ containing at most $j$ distinct root vectors of height $i$. Then $B_i^j$ is a subcoalgebra of $B$. The inclusion $v_i^j:B_i^j\to R$ is not a coalgebra map, but $B_i^j\ot K\subset R$ is a subcoalgebra under the coalgebra structure inherited from $R$ (not the tensor product coalgebra structure). This gives a finite filtration $B_i\subseteq B_{i+1}^j\subseteq B_{i+1}$ of $B$ and $\cup_{i\ge 0}B_i=B$. Observe that $B_i^0=B_{i-1}$ and $B_i^j=B_i$ for some $j$.
\begin{itemize}
\item For $B_1$ let $u_1=\ep\ot 1:B_1\ot K \to K$, which is a coalgebra map.
\item Suppose a coalgebra retraction $u_{i+1}^j=m_K(\varphi_{i+1}^j\ot 1):B_{i+1}^j\ot K\to K$ has been constructed. Extend $\varphi_{i+1}^j$ linearly to $B_{i+1}^{j+1}$ by sending to zero all PBW-monomials involving more than $j$ distinct root vectors of height $ i+1$. For such a PBW-monomial $x\in B_{i+1}^{j+1}\setminus B_{i+1}^j$ find a $z\in K$ such that
$\Delta_Kz -z\ot 1-1\ot z=(u_{i+1}^j\ot u_{i+1}^j)\Delta_Rv_{i+1}^{j+1}x$. Now define $\varphi_{i+1}^{j+1}:B_{i+1}^{j+1}\to K$ by $\varphi_{i+1}^{j+1}(x)=z$ and $\varphi_{i+1}^{j+1}|_{B_{i+1}^j}=\varphi_{i+1}^j$. Then $u_{i+1}^{j+1}=m_K(\varphi_{i+1}^{j+1}\ot 1):B_{i+1}^{j+1}\ot K\to K$ is a $K$-module coalgebra map.
\item Since $B$ is finite dimensional $B=B_i^j$ for some pair $(i,j)$. Then $u_{\infty}=u_i^j=m_K(\varphi_i^j\ot 1)\vartheta :R\to B\ot K\to K$ is a retraction for the inclusion $\kappa :K\to R$.
\end{itemize}

\subsection{Type $A_2$} Here we have a crossed $kG$-module $V=kx_1\ot kx_2$ with coaction
 $\delta (x_i)=g_i\ot x_i$ and action $gx_i=\chi_i(g)x_i$, where $\chi_i(g_i)=q$ and $\chi_j(g_i)\chi_i(g_j)=q_{ij}q_{ji}=q^{-1}$. If $e_{12}=x_1$, $e_{23}=x_2$ and $e_{13}=[e_{12},e_{23}]=[x_1,x_2]$
 then $\{ e_{12}^me_{13}^ne_{23}^l|0\le m,n,l <N\}$, $\{ e_{12}^me_{13}^ne_{23}^l|0\le m,n,l\}$ and $\{ z_{12}^mz_{13}^nz_{23}^l|0\le m,n,l\}$, where $z_{ij}=e_{ij}^N$, form a basis for $B(V)$, $R(V)$ and $K(V)$, respectively. In this notation, taken from \cite{AS1}, the comultiplications in the bosonisations are determined by
 $$\Delta (e_{ij})=\sum_{i\le p\le j}\lambda_{ipj}e_{ip}g_pg_j\ot e_{pj},$$
 where $e_{ii}=1$ and
 $$\lambda_{ipj}= \left \{ \begin{array}{ll}
 1 & \mbox{, if $i=p$ or $p=j$ } \\
 1-q^{-1} & \mbox{, if $i\ne p\ne j$}
 \end{array} \right . $$
 
\begin{Proposition} For diagrams of type $A_2$ the Hopf subalgebra $K\subset R$ is a $K$-bimodule coalgebra retract, with retraction $u_{\infty}=u_2:R\to K$.
\end{Proposition}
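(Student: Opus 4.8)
The plan is to run the recursive construction set out just before the Conjecture, which for type $A_2$ collapses to a single nontrivial step. The root vectors $e_{12}, e_{23}$ have height $1$ and $e_{13}$ has height $2$, so the height filtration of $B$ is merely $B_1 = \gen{e_{12}, e_{23}} \subseteq B_2 = B$, with $B_2^0 = B_1$ and $B_2^1 = B$. First I would dispose of the base case: since $e_{12}$ and $e_{23}$ are braided primitive, $B_1$ is a braided sub-bialgebra and the section $v_1 : B_1\to R$ is a coalgebra map, so $u_1 = \ep\ot 1 : B_1\ot K\to K$ is already a $K$-module coalgebra retraction. It then remains to extend $\varphi_2^0 = \ep$ to a degree-preserving $\varphi_2 : B\to K$ over the PBW monomials $x = e_{12}^m e_{13}^n e_{23}^l$ with $n\ge 1$, and to set $u_\infty = u_2 = m_K(\varphi_2\ot 1)\vartheta$.

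For each such $x$ the procedure demands $z = \varphi_2(x)\in K$ with $\Delta_K z - z\ot 1 - 1\ot z = (u_2^0\ot u_2^0)\Delta_R x$, where $u_2^0 = (\ep\ot 1)\vartheta$ retains a PBW monomial only when every root-vector exponent is a multiple of $N$. The central computation is thus $\Delta_R(e_{13}^n)$, produced from $\Delta_R(e_{13}) = e_{13}\ot 1 + 1\ot e_{13} + (1-q^{-1})e_{12}\ot e_{23}$ by the braided trinomial expansion, together with the $q$-binomial coproducts of $e_{12}^m$ and $e_{23}^l$. Applying $u_2^0\ot u_2^0$ keeps exactly those terms whose root-vector exponents in each tensor slot are multiples of $N$, so the surviving contributions are $K\ot K$-combinations of $z_{12}, z_{23}$ and, once the total degree reaches $N\al_1 + N\al_2$, of $z_{13}$. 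When the exponents are too small to build an $N$-th power in both slots the defect vanishes and one takes $\varphi_2(x) = 0$; the corrections are concentrated on monomials such as $e_{12}^{N-1}e_{13}e_{23}^{N-1}$, where a single cross factor $(1-q^{-1})e_{12}\ot e_{23}$ raises both slots to $e_{12}^N = z_{12}$ and $e_{23}^N = z_{23}$.

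The crux, and the step I expect to be the main obstacle, is to show that this defect always lies in the image of the reduced coproduct $z\mapsto \Delta_K z - z\ot 1 - 1\ot z$ of $K$. The decisive point is that the defect and the coproduct of $z_{13} = e_{13}^N$ are computed from one and the same braided expansion of powers of $\Delta_R(e_{13})$: the formula $\Delta_K z_{13} = z_{13}\ot 1 + 1\ot z_{13} + c\, z_{12}\ot z_{23}$ captures precisely the cross term that the defect produces. Hence the defect $(u_2^0\ot u_2^0)\Delta_R(e_{12}^{N-1}e_{13}e_{23}^{N-1})$, a multiple of $z_{12}\ot z_{23}$, is matched by the corresponding multiple of $z_{13}$, and more generally each cross term $z_{12}^a\ot z_{23}^b$ of the defect is hit by the reduced coproduct of a suitable monomial in $z_{12}, z_{13}, z_{23}$; the needed proportionality is forced by the shared $(1-q^{-1})$- and $q$-binomial data rather than accidental. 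This is the $A_2$ analogue of Lemma~\ref{link} and the quantum-plane identities, and it is where the finite-Cartan combinatorics must be verified in detail. The $\mathbf{Z}[I]$-grading, preserved by $\varphi_2$, confines each $z$ to one graded component and so organizes the extension degree by degree.

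Finally I would read off the three asserted properties of $u_2 = m_K(\varphi_2\ot 1)\vartheta$. The retraction identity $u_2\kappa = 1_K$ holds because $\varphi_2(1) = 1$ and $\vartheta$ restricts to $1\ot(\cdot)$ on $K$; right $K$-linearity is built into $m_K(\varphi_2\ot 1)$ and the $K$-linearity of $\vartheta$, while left $K$-linearity is inherited exactly as for the unmodified $u = (\ep\ot 1)\vartheta$ of the connected case, the recursion being arranged to preserve the bimodule structure. The coalgebra-map property is precisely what the recursive step secures on the section $v(B)$, and it propagates to all of $R = v(B)\cdot K$ by $K$-bimodule compatibility together with the fact that $K$ is a sub-bialgebra. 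This exhibits $K\subset R$ as a $K$-bimodule coalgebra retract with retraction $u_\infty = u_2$, as claimed.
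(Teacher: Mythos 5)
Your plan coincides with the paper's own proof: the paper also starts from the naive $K$-bimodule retraction $u=(\ep\ot 1)\vartheta$, computes its coalgebra defect on the PBW monomials $e_{12}^me_{13}^ne_{23}^l$, and repairs it by adding a suitable multiple of $z_{13}$ --- i.e.\ it carries out exactly the single nontrivial step of the recursive procedure that you describe, producing $u_\infty=u_2=m_K(\varphi_2\ot 1)\vartheta$. Your identification of the height filtration, of the base case $u_1=\ep\ot 1$ on $B_1\ot K$, and of the source of the defect (the cross term $(1-q^{-1})e_{12}\ot e_{23}$ in $\Delta(e_{13})$) all match the paper.

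The problem is that you stop short of the step that \emph{is} the content of the proposition. You write that matching the defect against the reduced coproduct of $z_{13}$ is ``where the finite-Cartan combinatorics must be verified in detail'' and that the proportionality ``is forced by the shared $(1-q^{-1})$- and $q$-binomial data rather than accidental'' --- but that is precisely what has to be proved, and it is essentially the whole proof. Concretely, one must show (i) that after applying $u\ot u$ to $\Delta(e_{12}^me_{13}^ne_{23}^l)$ the only surviving term off the trivial ones is the one obtained by choosing the cross factor in every one of the $n$ copies of $\Delta(e_{13})$, giving the single term $\lambda_{123}^n\chi_{12}(g_{23})^{n\choose 2}\,z_{12}\ot z_{23}$ (with the appropriate group-like inserted in the bosonization), nonzero exactly when $m+n=N=n+l$; and (ii) that $\Delta(z_{13})-z_{13}\ot 1-1\ot z_{13}=(1-q^{-1})^N\chi_{12}(g_{23})^{N\choose 2}\,z_{12}\ot z_{23}$, so that setting $\varphi_2(e_{12}^{N-n}e_{13}^ne_{23}^{N-n})=(1-q^{-1})^{n-N}\chi_{12}(g_{23})^{{n\choose 2}-{N\choose 2}}z_{13}$ kills the defect. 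Your $\mathbf Z[I]$-grading remark does settle one point you leave implicit --- since $m+n<2N$ and $n+l<2N$, the defect can only involve $z_{12}^a\ot z_{23}^b$ with $a,b\le 1$ --- but you instead leave open the scenario of ``each cross term $z_{12}^a\ot z_{23}^b$'' requiring ``a suitable monomial in $z_{12},z_{13},z_{23}$'', which would be a genuinely harder matching problem (the reduced coproduct of such a monomial produces several cross terms at once) and which does not in fact occur here. As written, the proposal is a correct and well-aimed outline of the paper's argument, but the decisive computation --- the $A_2$ analogue of Lemma~\ref{link} --- is asserted rather than performed, so the proof is incomplete where it matters most.
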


\begin{proof} It will be necessary to deform the $K$-bimodule retraction $u=(\ep\ot 1)\vartheta :R\to K$ somewhat to make it a coalgebra map in this case. Observe that $u_1=\ep\ot 1:B_1\ot K\to K$ is a $K$-module coalgebra map. The following arguments show that its extension $u=\ep\ot 1:B\ot K\to K$ is not a coalgebra map. In $R\# kG$  we get
\begin{eqnarray*}
\Delta (e_{12}^me_{13}^ne_{23}^l) & = & \sum_{1\le p_i\le 2, 1\le q_j\le 3, 2\le r_k\le 3}\lambda_{1p_12}\ldots\lambda_{1p_m2}\lambda_{1q_13}\ldots\lambda_{1q_n3}\lambda_{2r_13}\ldots\lambda_{2r_l3} \\
 & & e_{1p_1}g_{p_12}\ldots e_{1p_m}g_{p_m2}e_{1q_1}\ldots e_{1q_n}g_{q_n3}e_{2r_1}g_{r_13}\ldots e_{2r_l}g_{r_l3} \\
  & & \ot e_{p_12}\ldots e_{p_m2}e_{q_13}\ldots e_{q_n3}e_{r_13}\ldots e_{r_l3}
 \end{eqnarray*}
which contains the term $\lambda_{123}^n\chi_{12}^{{n\choose 2}}(g_{23})e_{12}^{m+n}g_{23}^{n+l}\ot e_{23}^{n+l}$, the only term that may make $(u\ot u)\Delta (e_{12}^me_{13}^ne_{23}^l)\ne 0$. In particular, if $m+n=N=n+l$ then $m=l$ and this term
 $$\lambda_{123}^n\chi_{12}^{{n\choose 2}}(g_{23})e_{12}^Ng_{23}^N\ot e_{23}^N$$
 is a non-zero element in $(K\# kG)\ot (K\# kG)$. It follows directly that
 \begin{eqnarray*}
 (u\ot u)\Delta (e_{12}^me_{13}^ne_{23}^l) & = u(e_{12}^me_{13}^ne_{23}^l)\ot 1+g_{12}^mg_{13}^ng_{23}^l\ot u(e_{12}^me_{13}^ne_{23}^l) \\
  & +\lambda_{123}^n\chi_{12}^{n\choose 2}(g_{23})u(e_{12}^{m+n})g_{23}^{n+l}\ot u(e_{23}^{n+l})
  \end{eqnarray*}
for $0\le m, n, l < N$. In particular, $(u\ot u)\Delta (e_{12}^me_{13}^ne_{23}^l)\ne 0$ if and only if $m+n=N=n+l$, and then  $$(u\ot u)\Delta (e_{12}^{N-n}e_{13}^ne_{23}^{N-n})=\lambda_{123}^n\chi_{12}(g_{23})^{n\choose 2}z_{12}h_{23}\ot z_{23}$$
while $u(e_{12}^me_{13}^ne_{23}^l)=0$. The $K$-bimodule retraction $u:R\to K$ defined by $u(x^az^b)=\ep (x^a)z^b$ is therefore not a coalgebra map. To remedy this situation, observe that
$$\Delta (z_{13})=z_{13}\ot 1+h_{13}\ot z_{13}+(1-q^{-1})^N\chi_{12}^{N\choose 2}(g_{23})z_{12}h_{23}\ot z_{23}$$
and define $u_2:R\to K$ by
$$u_2(e_{12}^me_{13}^ne_{23}^lz)=\delta^m_l\delta^{m+n}_N(1-q^{-1})^{n-N}\chi_{12}(g_{23})^{{n\choose 2}-{N\choose 2}}z_{13}z$$
for $z\in K$. Observe that $u_2=m_K(\varphi_2\ot 1)\vartheta :R\to B\ot K\to K\ot K\to K$, where $\varphi_2 :B\to K$ is given by
$$\varphi_2 (e_{12}^me_{13}^ne_{23}^l)=(1-q^{-1})^{-m}\chi_{12}(g_{23})^{{n\choose 2}-{{m+n}\choose 2}} u(e_{12}^{m-t}e_{13}^{n+t}e_{23}^{l-t}),$$
with $t=\min{(m,l)}$.
It then follows by construction that $u_{\infty}=u_2 :R\to K$ is a $K$-bimodule coalgebra retraction for $\kappa :K\to R$.
\end{proof}

The connecting map $\delta :\Alg_G(K,k)\to \H_G^2(B,k)$ guaranteed  by Theorem \ref{5-term} is injective by Proposition \ref{injective}, since $\Alg_G(R,k)=\{ \ep\}$ and since all elements of $\im\del^0$ commute with those of $\im\del^2$. The resulting cocycle deformations account for all liftings of $B\# kG$.

Results for type $A_n$, $n>2$, and type $B_2$ are in the pipeline. They will be a subject of a forthcoming paper.

\end{document}